\numberwithin{equation}{section}
\newtheorem{theorem}{Theorem}[section]
\newtheorem{lemma}[theorem]{Lemma}
\newtheorem{prop}[theorem]{Proposition}
\theoremstyle{definition}
\newtheorem{definition}{Definition}[section]
\begin{document}
	\title[ON BIRATIONAL TORELLI THEOREMS]
	{On birational Torelli theorems}
	
	\author{Sumit Roy}
	\address{School of Mathematics, Tata Institute of Fundamental Research, Homi Bhabha Road, Colaba, Mumbai 400005, India.}
	\email{sumit@math.tifr.res.in}
	\thanks{E-mail : sumit@math.tifr.res.in}
	\thanks{Address : School of Mathematics, Tata Institute of Fundamental Research, Homi Bhabha Road, Colaba, Mumbai 400005, India.}
	\subjclass[2010]{14C34, 14D20, 14E05, 14H60}
	\keywords{Principal bundles, Parabolic bundles, Hitchin map, Torelli theorem}
	
	\begin{abstract}
		Let $G$ be a simple simply-connected connected linear algebraic group over $\mathbb{C}$. We proved a $2$-birational Torelli theorem for the moduli space of semistable principal $G$-bundles over a smooth curve of genus $\geq 3$, which says that if two such moduli spaces are $2$-birational then the curves are isomorphic. We also proved a $3$-birational Torelli theorem for the moduli space of stable symplectic parabolic bundles over a smooth curve of genus $\geq 4$.
	\end{abstract}
	
	\maketitle

	\section{Introduction}
	Let $X$ and $X'$ be two compact Riemann surfaces. The classical Torelli theorem says that if their Jacobian varieties $\mathrm{Jac}(X)$ and $\mathrm{Jac}(X')$ are isomorphic as polarized varieties, with the canonical polarization given by the theta line bundle, then $X$ is isomorphic to $X'$.
	
	There are several nonabelian analogues of the Torelli theorem, by considering the moduli spaces of vector bundles instead of Jacobians. In \cite{MN68}, Mumford and Newstead proved a Torelli theorem for the moduli space of vector bundles with rank $2$ and fixed determinant of odd degree. Later, in \cite{NR75}, Narasimhan and Ramanan extended this result for any rank. 
	
	In \cite{BH12}, Biswas and Hoffmann proved a Torelli theorem for the moduli space of semistable principal $G$-bundles ($G$ is a connected reductive complex affine algebraic group) over $X$ of genus $\geq 3$ of topological type $d \in \pi_1(G)$. They considered the strictly semistable locus and showed that it lies in the singular locus and is characterized by the types of singularities. Then they considered a morphism to a projective space using the powers of its anticanonical line bundle, and using the fiber of this morphism they were able to reconstruct the Jacobian of $X$ and its canonical principal polarization. Hence the result follows from the classical Torelli theorem. 
	
	In \cite{BBB01}, Balaji, Baño and Biswas proved a Torelli theorem for parabolic bundles of rank $2$, fixed determinant of degree $1$ and small parabolic weights (assuming $g\geq 2$). Later in \cite{AG19}, Alfaya and Gómez extended this result for parabolic bundles of any rank assuming $g \geq 4$ and fixed determinant. In \cite{R21}, a Torelli theorem was proved for the moduli space of stable symplectic parabolic bundles over a compact Riemann surface of genus $\geq 4$, with rank and degree coprime, small parabolic weights.
	
 In this paper we address two birational Torelli theorems, one for the moduli space of semistable principal bundles and another for the moduli space of stable symplectic parabolic bundles. Two varieties $V$ and $V'$ are called $k$-birational if there exist two open sets $U \subset V$ and $U' \subset V'$ whose respective complements have codimension $\geq k$ and an isomorphism $\phi : U \xrightarrow{\sim} U'$. 
 
 Let $G$ be a simple simply-connected connected affine algebraic group over $\mathbb{C}$. We will prove that if two moduli spaces of semistable principal $G$-bundles over compact Riemann surfaces $X$ and $X'$ (genus $\geq 3$) are $2$-birational, then $X$ is isomorphic to $X'$ [Theorem \ref{thm1}].  In \cite{AB21}, Alfaya and Biswas proved a similar result for the moduli space of vector bundles.
 
 Let $X$ and $X'$ be two compact Riemann surfaces of genus $g \geq 4$ and $g' \geq 4$ respectively with the set of marked points $D\subset X$ and $D' \subset X'$. Let $L$ and $L'$ be two parabolic line bundles with trivial parabolic structure. Let $\mathcal{M}_{\mathrm{Sp}}(2m,\alpha,L)$ and $\mathcal{M}_{\mathrm{Sp}}(2m',\alpha',L')$ be two moduli spaces of stable symplectic parabolic bundles over $X$ and $X'$ respectively with rank and degree coprime, small parabolic weights. We will show that if $\mathcal{M}_{\mathrm{Sp}}(2m,\alpha,L)$ and $\mathcal{M}_{\mathrm{Sp}}(2m',\alpha',L')$ are $3$-birational then there exists an isomorphism $X \cong X'$ sending $D$ to $D'$ [Theorem \ref{thm2}]. Alfaya and Gomez in their paper \cite{AG19} proved a $3$-birational Torelli theorem for the moduli space of parabolic bundles.

\section{Preliminaries}
Let $X$ be a compact Riemann surface. Let $G$ be a simple simply-connected connected complex affine algebraic group and let $\mathfrak{g}=\mathrm{Lie}(G)$ be the Lie algebra of $G$.

\subsection{Principal $G$-bundles}
We consider a holomorphic principal $G$-bundle $\pi : P \to X$ over $X$. Let $\mathrm{Ad} : G \to \operatorname{End}(\mathfrak{g})$ be the adjoint action of $G$ on $\mathfrak{g}$. This induces an action on $P \times \mathfrak{g}$ and the associated bundle $P(\mathrm{ad}) \coloneqq P\times^{\mathrm{Ad}} \mathfrak{g}$ is called the \textit{adjoint bundle} of $P$, denoted by $\mathrm{ad}(P)$.

A principal $G$-bundle $P$ is called \textit{stable} (resp. \textit{semistable}) if for every holomorphic reduction of structure group $P_H$ of $P$ to a maximal parabolic subgroup $H \subsetneqq G$, we have
\[
\deg (\mathrm{ad}(P_H)) < 0 \hspace{0.3cm} (\mathrm{resp. } \hspace{0.2cm} \leq \hspace{0.05cm}).
\]

 The moduli space of semistable principal $G$-bundles over $X$ was constructed by Ramanathan in \cite{R96}. It is a normal irreducible projective variety of dimension $(g-1)\dim G$, where $g$ is the genus of $X$.

	\subsection{Parabolic vector bundles}Let $D \subset X$ be a subset of $n$ distinct points of $X$. A \textit{parabolic vector bundle} $E_*$ of rank $r$ on $X$ is a holomorphic vector bundle $E$ of rank $r$ on $X$ together with a parabolic structure along $D$, i.e. for each $p \in D$, we have
\begin{enumerate}
	\item a filtration of subspaces  $$\left.E\right|_p= E_{p,1}\supsetneq \dots \supsetneq E_{p,r_p} \supsetneq E_{p,r_p+1} = \{0\}, $$
	\item a sequence of real numbers (weights)  satisfying $$0\leq \alpha_1(p) < \alpha_2(p) < \dots < \alpha_{r_p}(p) < 1.$$
\end{enumerate}
The weight $\alpha_i(p)$ corresponds to the subspace $E_{p,i}$. We fix the finite subset $D$ once and for all.  

We denote $\alpha = \{(\alpha_1(p),\dots,\alpha_{r_p}(p)) \}_{p \in D}$ to the system of weights corresponding to the fixed parabolic structure. The parabolic structure is said to have \textit{full flags} whenever $\dim(E_{p,i}/E_{p,i+1}) = 1$ \hspace{0.2cm}$\forall i, \forall p\in D$. 

The \textit{parabolic degree} of a parabolic bundle $E_*$ is defined by
\[
\operatorname{pardeg}(E_*) \coloneqq \deg(E)+ \sum\limits_{p\in D}\sum\limits_{i=1}^{r_p}\alpha_i(p) \cdot \dim(E_{p,i}/E_{p,i+1})
\]
and the real number $$\mu_{par}(E_*)\coloneqq \frac{\operatorname{pardeg}(E_*)}{\mathrm{rk}(E)}$$ is called the \textit{parabolic slope} of $E_*$. The dual and tensor product of parabolic bundles can be defined in a natural way (see \cite{Y95}).

A \textit{parabolic homomorphism} $\phi : E_* \to E^\prime_*$ between two parabolic vector bundles is a vector bundle homomorphism which satisfies
the following: at each $p \in D$ we have 
\[
\phi_p(E_{p,i}) \subseteq E_{p,j+1}^\prime \hspace{0.2cm} \mathrm{whenever} \hspace{0.2cm} \alpha_i(p) > \alpha_{j}^\prime(p).
\] 
Furthermore, we call such morphism \textit{strongly parabolic} if 
\[
\phi_p(E_{p,i}) \subseteq E_{p,j+1}^\prime \hspace{0.2cm} \mathrm{whenever} \hspace{0.2cm} \alpha_i(p) \geq \alpha_{j}^\prime(p).
\]
for every $p \in D$. We denote the parabolic endomorphisms of $E_*$ by $\operatorname{PEnd}(E_*)$ and strongly parabolic endomorphisms by $\operatorname{SPEnd}(E_*)$.

\subsection{Symplectic parabolic Higgs bundles} Fix a parabolic line bundle $L_*$ over $X$. Let $E_*$ be a parabolic bundle over $X$ and let
\begin{equation}\label{form}
\psi : E_* \otimes E_* \to L_*
\end{equation}
be a parabolic homomorphism. Consider the morphism
\[
\psi \otimes Id : (E_* \otimes E_*) \otimes E^\vee_* \to L_* \otimes E^\vee_*,
\]
where $E_*^\vee$ denote the parabolic dual of $E_*$. A section of the vector bundle underlying $E_* \otimes E_*^\vee$ is an endomorphism of $E$ preserving the quasi-parabolic structure. The trivial line bundle $\mathcal{O}_X$ equipped with the trivial parabolic structure (meaning all weights are zero) can be realized as a parabolic subbundle of $E_* \otimes E^\vee_*$ by sending a locally defined function $f$ to the locally defined endomorphism of $E$ given by pointwise multiplication with $f$. Let
\[
\tilde{\psi} : E_* \to L_* \otimes E^\vee_*
\]
be the homomorphism defined by the composition
\[
E_* = E_* \otimes \mathcal{O}_X  \xhookrightarrow{} E_* \otimes (E_* \otimes E^\vee_*) = (E_* \otimes E_*) \otimes E^\vee_* \xrightarrow{\psi \otimes Id} L_* \otimes E^\vee_*.
\]
\begin{definition} A \textit{symplectic parabolic bundle} is a pair $(E_*,\psi)$ of the above form such that $\psi$ is anti-symmetric and $\tilde{\psi}$ is an isomorphism.
\end{definition}

For a symplectic parabolic bundle $(E_*,\psi)$, we denote the symplectic parabolic endomorphisms by $\mathrm{PEnd}_{\mathrm{Sp}}(E_*) \subset \mathrm{PEnd}(E_*)=E_* \otimes E_* \otimes L_*^\vee$.

\begin{definition} Let $K$ denote the canonical bundle on $X$. We write $K(D) \coloneqq K \otimes \mathcal{O}(D)$. A \textit{strongly parabolic Higgs bundle} on $X$ is a pair $(E_*,\Phi)$ where $E_*$ is a parabolic bundle on $X$ and $\Phi$ is a morphism $\Phi : E_* \to E_* \otimes K(D)$ which is strongly parabolic. The morphism $\Phi$ is called the \textit{strongly parabolic Higgs field} associated to the bundle $E_*$.
\end{definition}

Throughout this paper, we will always assume that the parabolic Higgs field is strongly parabolic.

Let $(E_*,\psi)$ be a symplectic parabolic bundle over $X$. A parabolic Higgs field on $E_*$ will induce a parabolic Higgs field on $L_* \otimes E^\vee_*$ (here we are considering the zero section as the Higgs field on $L_*$). A parabolic Higgs field $\Phi$ on $(E_*,\psi)$ is said to be compatible with  $\psi$ if $\tilde{\psi}$ takes $\Phi$ to the induced parabolic Higgs field on $L_* \otimes E^\vee_*$.
\begin{definition}
A \textit{symplectic parabolic Higgs bundle} $(E_*,\psi,\Phi)$ is a symplectic parabolic bundle $(E_*,\psi)$  together with a parabolic Higgs field $\Phi$ on $E_*$ which is compatible with $\psi$.
\end{definition}

Suppose $E$ is the underlying vector bundle of a symplectic parabolic bundle $(E_*,\psi)$. The vector bundle tensor product $E \otimes E$ is a coherent subsheaf of the vector bundle underlying the parabolic vector bundle $E_*\otimes E_*$. Therefore, $\psi$ induces a morphism
\begin{equation*}
	\hat{\psi} : E \otimes E \to L,
\end{equation*}
where $L$ is the line bundle underlying $L_*$. A holomorphic subbundle $F \subset E$ is called \textit{isotropic} if $\hat{\psi}(F \otimes F) = 0$. The parabolic structure on $E$ induces a parabolic structure on the subbundle $F$. Let $F_*$ be the parabolic bundle with the induced parabolic structure on $F$.

\begin{definition}
	A symplectic parabolic bundle $(E_*,\psi)$ is called \textit{stable} (resp. \textit{semistable}) if every nonzero isotropic subbundle $F \subset E$ satisfies
	\[
	\mu_{par}(F_*) < \mu_{par}(E_*) \hspace{0.3cm}(\mathrm{resp.} \hspace{0.2cm} \leq \hspace{0.05cm} ).
	\]
\end{definition}

\begin{definition} A symplectic parabolic Higgs bundle $(E_*,\psi,\Phi)$ is called \textit{stable} (resp. \textit{semistable}) if every nonzero isotropic subbundle $F \subset E$ such that $\Phi(F) \subset F \otimes K(D)$ satisfies
	\[
	\mu_{par}(F_*) < \mu_{par}(E_*) \hspace{0.3cm}(\mathrm{resp.} \hspace{0.2cm} \leq \hspace{0.05cm} ).
	\]
\end{definition}

When all weights are rational, the notion of symplectic parabolic bundle coincides with the notion of parabolic principal $G$-bundle where $G$ is a complex symplectic group (see \cite{BBN01, BBN03, BMW11} for details).

The moduli space of semistable parabolic $G$-bundles with a fixed parabolic structure $\alpha$ was described in \cite{BR89} and \cite{BBN01}. It is a complete normal variety and the moduli space of stable parabolic $G$-bundles is an open subvariety. Fix a parabolic line bundle $L$ with the trivial parabolic structure.  
Let $\mathcal{M}_{\text{Sp}}(2m,\alpha,L)$ denote the moduli space of stable symplectic parabolic bundles of rank $2m$ $ (m > 1)$,  and fixed parabolic structure $\alpha$, with the symplectic form (\ref{form}) taking values in $L$. When the parabolic structure $\alpha$ has full flags
\[
\dim \mathcal{M}_{\text{Sp}}(2m,\alpha,L) = m(2m+1)(g-1) + m^2n,
\]
where $n$ is the number of marked points (i.e. the number of points in $D$) on $X$ (see \cite[Theorem II]{BR89}). From now on, we assume that the weights are all rational and the parabolic structure has full flags at each point in $D$.

Let $\mathcal{N}_{\text{Sp}}(2m,\alpha,L)$ denote the moduli space of stable symplectic parabolic Higgs bundles of rank $2m$ (see \cite{R16}). By considering the zero Higgs fields, we have an embedding $\mathcal{M}_{\text{Sp}}(2m,\alpha,L) \xhookrightarrow[]{} \mathcal{N}_{\text{Sp}}(2m,\alpha,L)$. By the parabolic Serre duality (see \cite[p. $1470$]{BMW11}, \cite{Y95,BY96}), the cotangent bundle $T^*\mathcal{M}_{\text{Sp}}(2m,\alpha,L)$ is contained inside the moduli space $\mathcal{N}_{\text{Sp}}(2m,\alpha,L)$ as an open subset. Therefore,
\[
\dim\mathcal{N}_{\text{Sp}}(2m,\alpha,L) = 2\dim \mathcal{M}_{\text{Sp}}(2m,\alpha,L) = 2m(2m+1)(g-1) + 2m^2n.
\]

\begin{definition}
	Let $k$ and $l$ be two integers. A symplectic parabolic bundle $(E_*,\psi)$ is $(k,l)$-\textit{stable} (resp. $(k,l)$-\textit{semistable}) if every nonzero isotropic subbundle $F \subset E$ satisfies 
	\[
	\frac{\text{pardeg}(F_*) + k}{\mathrm{rk}(F)} < \frac{\text{pardeg}(E_*) - l}{\text{rk}(E)} \hspace{0.5cm}(\mathrm{resp.  } \hspace{0.2cm} \leq \hspace{0.05cm}).
	\]	
\end{definition}
Observe that if $k$ and $l$ are nonnegative, then a $(k,l)$-stable symplectic parabolic bundle is stable in the usual sense.

\begin{prop}\cite[Proposition $1$]{R21}\label{prop1}
	For $ g \geq 3$, the locus of $(1,0)$-stable symplectic parabolic bundles is a non-empty Zariski open subset of $\mathcal{M}_{\textnormal{Sp}}(2m,\alpha,L)$.
\end{prop}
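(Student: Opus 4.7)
The plan is to establish the two halves of the statement separately: the $(1,0)$-stability condition defines an open subset of $\mathcal{M}_{\mathrm{Sp}}(2m,\alpha,L)$, and it is non-empty for $g \geq 3$.

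For openness I would argue in families. Given a flat family $\{(E_{t,*},\psi_t)\}_{t \in T}$ of stable symplectic parabolic bundles, the possible invariants $(r',d')=(\mathrm{rk}(F),\mathrm{pardeg}(F_*))$ of an isotropic subbundle violating the $(1,0)$-stability inequality form a finite set: the rank lies in $\{1,\dots,m\}$, and combining the stability of $E_*$ with rationality of the weights constrains $\mathrm{pardeg}(F_*)$ to the half-open interval $\bigl[\tfrac{r'}{2m}\mathrm{pardeg}(E_*)-1,\,\tfrac{r'}{2m}\mathrm{pardeg}(E_*)\bigr)$ intersected with a fixed $\tfrac{1}{N}\mathbb{Z}$. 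For each admissible $(r',d')$, the relative parabolic Quot scheme of such subbundles is proper over $T$, and the isotropy condition $\psi|_{F\otimes F}=0$ cuts out a closed subscheme; the finite union of its images in $T$ is the non-$(1,0)$-stable locus, so the $(1,0)$-stable locus is open.

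For non-emptiness I would run a dimension-count argument in the spirit of Narasimhan–Ramanan's treatment of $(k,l)$-stability for vector bundles, adapted to the symplectic parabolic setting (the analogous count for ordinary parabolic bundles appears in \cite{AG19}). Assume for contradiction that no $(1,0)$-stable symplectic parabolic bundle exists. Then $\mathcal{M}_{\mathrm{Sp}}(2m,\alpha,L)$ is covered by the finitely many closed strata $Z_{r',d'}$ of bundles admitting a destabilizing isotropic sub of invariants $(r',d')$, and it suffices to show $\dim Z_{r',d'} < \dim \mathcal{M}_{\mathrm{Sp}}(2m,\alpha,L)$ for every such pair. I would parameterize triples $(F_*,E_*,F_*\hookrightarrow E_*)$ in three steps: (A) the moduli of parabolic bundles $F_*$ with the prescribed invariants has dimension at most $r'^2(g-1)$ plus a fixed parabolic correction; (B) for fixed $F_*$, the symplectic extensions $0\to F_*\to E_*\to (E/F)_*\to 0$ compatible with $\psi$ are controlled by an $\mathrm{Ext}^{1}$ in the parabolic category, with additional constraints from the symplectic pairing (and, in the Lagrangian case $r'=m$, the quotient is forced to be $F_*^\vee\otimes L_*$); (C) parabolic Riemann–Roch, compared with $\dim\mathcal{M}_{\mathrm{Sp}}(2m,\alpha,L)=m(2m+1)(g-1)+m^2n$, should produce a positive gap of order $g-2$ coming from the fact that $F_*$ sits at near-critical slope (the extra $+1$ in the definition of $(1,0)$-stability costs one $h^{0}$ term in the relevant Euler characteristic). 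This gap is strictly positive precisely when $g\geq 3$, yielding the required contradiction.

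The principal obstacle will be the symplectic bookkeeping in Step (B)–(C): one must treat the Lagrangian case $r'=m$ separately from the general isotropic case $r'<m$, since the symplectic form pins down the quotient and the Serre-duality computation differently in the two regimes. Making the inequality $\dim Z_{r',d'} < \dim\mathcal{M}_{\mathrm{Sp}}(2m,\alpha,L)$ uniform in $(r',d')$ and sharp enough to hold already at $g=3$ (rather than only for large genus) is where the argument requires the most care, and is also where the full-flag hypothesis and rationality of the weights enter, via control of the parabolic correction terms in Riemann–Roch.
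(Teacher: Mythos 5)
This paper does not actually prove the proposition --- it is imported verbatim from \cite[Proposition 1]{R21} --- but the proof given there is essentially the argument you outline: openness from boundedness and properness of the family of isotropic subbundles with the finitely many admissible invariants $(r',d')$, and non-emptiness from a Narasimhan--Ramanan-style dimension count on the strata of non-$(1,0)$-stable bundles, adapted from the symplectic case in \cite{BGM12} and the parabolic case in \cite{AG19}. Your plan takes the same route and contains no false steps; the only caveat is that the uniform inequality $\dim Z_{r',d'} < \dim\mathcal{M}_{\mathrm{Sp}}(2m,\alpha,L)$ for $g\geq 3$, which you defer to steps (B)--(C), is the entire substance of the cited proof rather than a routine afterthought.
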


 \begin{lemma}\cite[Lemma $2.3$]{R21}\label{lemma1}
	Suppose $g \geq 4$ and the weights are small enough so that the stability of the symplectic parabolic Higgs bundle is equivalent to the stability of the underlying vector bundle. Then $H^0(\textnormal{PEnd}_{\mathrm{Sp}}(E_*)(x))=0$ for a generic stable symplectic parabolic bundle $E_*$.
\end{lemma}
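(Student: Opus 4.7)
The plan is to translate the cohomological vanishing into a statement about generic surjectivity of an evaluation map on compatible Higgs fields, and then to verify that surjectivity by a dimension count combined with a specialisation argument.

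Under the non-degenerate pairing $\tilde{\psi} \colon E_{*} \xrightarrow{\sim} L_{*} \otimes E_{*}^{\vee}$ induced by $\psi$, one obtains the identification $\mathrm{PEnd}(E_{*}) \cong E_{*} \otimes E_{*} \otimes L_{*}^{\vee}$, under which $\mathrm{PEnd}_{\mathrm{Sp}}(E_{*})$ is the symmetric component (fibrewise, $\mathfrak{sp}(2m) \cong S^{2}\mathbb{C}^{2m}$). In particular, the identity endomorphism of $E$ does not lie in $\mathrm{PEnd}_{\mathrm{Sp}}(E_{*})$; since $E_{*}$ is stable, $H^{0}(\mathrm{PEnd}(E_{*})) = \mathbb{C}$ is spanned by the identity, so $H^{0}(\mathrm{PEnd}_{\mathrm{Sp}}(E_{*})) = 0$. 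The vector bundle underlying $\mathrm{PEnd}_{\mathrm{Sp}}(E_{*})$ is moreover self-dual, because the trace form is non-degenerate on $\mathfrak{sp}$. Writing $\mathrm{SPEnd}_{\mathrm{Sp}}(E_{*}) := \mathrm{SPEnd}(E_{*}) \cap \mathrm{PEnd}_{\mathrm{Sp}}(E_{*})$ (the sheaf whose global sections are the compatible strongly parabolic Higgs fields), parabolic Serre duality thus gives
\[
H^{0}\bigl(\mathrm{PEnd}_{\mathrm{Sp}}(E_{*})(x)\bigr)^{\vee} \;\cong\; H^{1}\bigl(\mathrm{SPEnd}_{\mathrm{Sp}}(E_{*})(-x) \otimes K(D)\bigr).
\]

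Now I would tensor the standard sequence $0 \to \mathcal{O}_{X}(-x) \to \mathcal{O}_{X} \to \mathbb{C}_{x} \to 0$ with $\mathrm{SPEnd}_{\mathrm{Sp}}(E_{*}) \otimes K(D)$. Using $H^{1}(\mathrm{SPEnd}_{\mathrm{Sp}}(E_{*}) \otimes K(D)) \cong H^{0}(\mathrm{PEnd}_{\mathrm{Sp}}(E_{*}))^{\vee} = 0$ from the previous step, the long exact sequence reduces the sought vanishing to the surjectivity of the evaluation map
\[
\mathrm{ev}_{x} \colon H^{0}\bigl(\mathrm{SPEnd}_{\mathrm{Sp}}(E_{*}) \otimes K(D)\bigr) \twoheadrightarrow \bigl(\mathrm{SPEnd}_{\mathrm{Sp}}(E_{*}) \otimes K(D)\bigr)_{x}.
\]

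Finally, I would show that $\mathrm{ev}_{x}$ is surjective for generic stable $E_{*}$. The domain has dimension $\dim \mathcal{M}_{\mathrm{Sp}}(2m,\alpha,L) = m(2m+1)(g-1) + m^{2}n$ (identified with the cotangent fiber via parabolic Serre duality), while the target has dimension $m(2m+1)$. For $g \geq 4$ the domain is substantially larger than the target, and surjectivity of $\mathrm{ev}_{x}$ is an open condition as $E_{*}$ varies over $\mathcal{M}_{\mathrm{Sp}}(2m,\alpha,L)$; by semicontinuity it then suffices to exhibit one stable $E_{*}$ on which it holds. This last step is the main obstacle. Under the hypothesis that symplectic parabolic Higgs stability coincides with the stability of the underlying vector bundle (the small-weights assumption together with $g \geq 4$), one can construct such $E_{*}$ from classical vector-bundle building blocks---for instance, a sufficiently generic extension or a direct sum of lower-rank symplectic pieces---on which $\mathrm{ev}_{x}$ can be verified directly; alternatively one invokes dominance properties of the Hitchin morphism on $\mathcal{N}_{\mathrm{Sp}}(2m,\alpha,L)$ to produce a compatible Higgs field whose value at $x$ spans the whole fiber.
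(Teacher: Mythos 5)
The paper does not reprove this lemma; it imports it verbatim from \cite[Lemma 2.3]{R21}, so the comparison is with the argument there. Your formal reductions are fine: identifying $\mathrm{PEnd}_{\mathrm{Sp}}(E_*)$ with the symmetric part of $E_*\otimes E_*\otimes L_*^\vee$, observing that the identity is not symplectic so that stability of the underlying bundle gives $H^0(\mathrm{PEnd}_{\mathrm{Sp}}(E_*))=0$, and then using parabolic Serre duality plus the restriction sequence at $x$ to convert the desired vanishing into surjectivity of the evaluation map on $H^0(\mathrm{SPEnd}_{\mathrm{Sp}}(E_*)\otimes K(D))$ --- all of this is correct and standard, and openness of the condition in the (irreducible) moduli space correctly reduces the lemma to exhibiting a single stable $E_*$ with the property.

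That single exhibit, however, is the entire content of the lemma, and you do not produce it. A dimension count comparing $m(2m+1)(g-1)+m^2n$ with $m(2m+1)$ makes surjectivity plausible but proves nothing, and neither of your two fallback routes works as stated. A direct sum of lower-rank symplectic pieces is not stable, and surjectivity of $\mathrm{ev}_x$ is not verified for it in any case; dominance of the Hitchin morphism only controls the $m$ characteristic coefficients $s_2,\dots,s_{2m}$ of a Higgs field, which is far too little information to conclude that values of sections span the full $m(2m+1)$-dimensional fiber of $\mathrm{SPEnd}_{\mathrm{Sp}}(E_*)\otimes K(D)$ at $x$. The actual proof in \cite{R21} (following the pattern of \cite{AG19} and \cite{BGM12}) handles this by a Brill--Noether-type estimate: one bounds the dimension of the locus of bundles admitting a nonzero section of $\mathrm{PEnd}_{\mathrm{Sp}}(E_*)(x)$ via an incidence/Hecke-correspondence construction and shows it is a proper closed subset of the moduli space precisely when $g\geq 4$; it is also exactly here that the small-weights hypothesis enters, by letting one transfer the question to the underlying vector bundle. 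Your proposal never engages with either hypothesis, which is the sign that the essential step is missing.
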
	

The following proposition is exactly \cite[Proposition $2$]{R21} and the proof can be found in \cite[pp.$10$-$11$]{BGM12}.
\begin{prop}\label{3}
	Let $Y$ be an integral curve which has a unique simple node. Also assume that $Y$ possesses an involution $\sigma$ and let $\pi_Y : \tilde{Y} \to Y$ be the normalization. Then the compactified Jacobian $\bar{J}(Y)$ is birational to a $\mathbb{P}^1$-fibration over $J(\tilde{Y})$. \\
	Analogously, let $Y$ be an integral curve with two simple nodes which possesses an involution $\sigma$ which interchanges these two nodes, and let $\tilde{Y}$ be the normalization of $Y$. Then $\bar{J}(Y)$ is birational to a $\mathbb{P}^1 \times \mathbb{P}^1$-bundle on $J(\tilde{Y})$.\\
	And in either case the Prym variety, which is the fixed point variety of the involution, is an uniruled variety.
\end{prop}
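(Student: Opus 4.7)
The plan is to use the normalization morphism $\pi_Y : \tilde{Y} \to Y$ to describe $\bar{J}(Y)$ as a projective compactification of a multiplicative torus bundle over $J(\tilde{Y})$. Once that description is in hand, the birational statements follow by comparing with the obvious projective bundle on $J(\tilde{Y})$, and uniruledness of the Prym variety is extracted by tracking the involution $\sigma$ through this fibre structure.

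\textbf{Birational description.} First I would analyse the pullback $\pi_Y^{\ast} : \mathrm{Pic}^d(Y) \to \mathrm{Pic}^d(\tilde{Y})$. A line bundle on $Y$ is the datum of a line bundle $L'$ on $\tilde{Y}$ together with, for each node $q \in Y$, a gluing isomorphism between the fibres of $L'$ at the two preimages of $q$; hence $\pi_Y^{\ast}$ is surjective with kernel $\mathbb{G}_m$ in the one-node case and $\mathbb{G}_m \times \mathbb{G}_m$ in the two-node case. Next I would compactify. The boundary of $\bar{J}(Y)$ parametrises rank one torsion-free sheaves that fail to be locally free at the node(s); in the one-node case this locus is naturally identified with $J(\tilde{Y})$ via $L' \mapsto (\pi_Y)_{\ast} L'$. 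Projectivising the $\mathbb{G}_m$-torsor $J(Y) \to J(\tilde{Y})$, I would form the $\mathbb{P}^1$-bundle $P := \mathbb{P}(\mathcal{O} \oplus \mathcal{L}) \to J(\tilde{Y})$, where $\mathcal{L}$ is the natural line bundle on $J(\tilde{Y})$ built from the Poincar\'e bundle, and exhibit a birational morphism $P \to \bar{J}(Y)$ which is an isomorphism on the open locus of line bundles and which identifies the two disjoint sections $\mathbb{P}(\mathcal{O})$ and $\mathbb{P}(\mathcal{L})$ of $P$ with the single boundary copy of $J(\tilde{Y})$ in $\bar{J}(Y)$. The two-node case is analogous, with $\mathcal{L}$ replaced by two line bundles and $P$ replaced by a $\mathbb{P}^1 \times \mathbb{P}^1$-bundle.

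\textbf{Uniruledness.} The involution $\sigma$ on $Y$ lifts to $\tilde{Y}$ and induces $\sigma$-actions on $J(\tilde{Y})$, on $\bar{J}(Y)$, and on the bundle $P$, with respect to which the birational equivalence of the previous step is equivariant. Via this equivariance I would identify the Prym variety birationally with the fixed locus of $\sigma$ on $P$. The latter admits an explicit description: over each $\tilde{\sigma}$-fixed point of $J(\tilde{Y})$, the fibre $\mathbb{P}^1$ of $P$ is either preserved pointwise by $\sigma$, or acted on by a non-trivial involution which still contains rational subcurves in its fixed locus (after identifying the two boundary sections in $\bar{J}(Y)$). Combined with the positive-dimensionality of the $\tilde{\sigma}$-fixed locus in $J(\tilde{Y})$, this produces a family of rational curves sweeping out the Prym, which is exactly the uniruledness assertion.

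\textbf{Expected main obstacle.} The delicate point is the precise nature of the $\sigma$-action on each fibre $\mathbb{P}^1$: this depends on whether the local branches at the node(s) are preserved or swapped by $\sigma$ and on the induced action of $\tilde{\sigma}$ on the gluing data. One needs to verify, in each case permitted by the hypotheses, that the fixed locus on a generic fibre genuinely contributes a rational curve rather than falling entirely into the indeterminacy or singular locus of the birational map $P \dashrightarrow \bar{J}(Y)$. Once this fibrewise bookkeeping is settled, uniruledness follows formally, but it is the place where the specific geometric hypotheses on $(Y,\sigma)$ enter essentially.
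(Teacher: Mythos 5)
The paper itself does not write out a proof of this proposition; it points to \cite[pp.~10--11]{BGM12}, and your birational description of $\bar{J}(Y)$ is exactly the one used there: $J(Y)\to J(\tilde Y)$ is a $\mathbb{G}_m$-torsor (resp.\ $\mathbb{G}_m\times\mathbb{G}_m$), the boundary consists of pushforwards of line bundles from $\tilde Y$, and $\bar J(Y)$ is obtained from $\mathbb{P}(\mathcal{O}\oplus\mathcal{L})$ (resp.\ a $\mathbb{P}^1\times\mathbb{P}^1$-bundle) by identifying boundary sections. That part of your argument is correct and is the same route as the reference.

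The uniruledness step, however, has a genuine gap, and it sits precisely at the point you flag as the ``expected main obstacle.'' Your dichotomy --- the fibre $\mathbb{P}^1$ over a fixed point of the base is ``either preserved pointwise or acted on by a non-trivial involution which still contains rational subcurves in its fixed locus'' --- does not close the argument: a non-trivial involution of $\mathbb{P}^1$ has exactly two fixed points, and identifying the two boundary sections of $P$ in $\bar J(Y)$ identifies points with points, so it cannot manufacture a rational curve. If the second alternative held generically, the fixed locus of $M\mapsto\sigma^*M^\vee$ would be generically finite over its image in $J(\tilde Y)$ and the Prym would \emph{not} be uniruled by this mechanism. So you must actually prove the first alternative, and this is where the hypotheses enter: in the one-node case the node lies at $t=0$ on the spectral curve and $\sigma(t)=-t$ \emph{exchanges the two branches}, hence the lift $\tilde\sigma$ exchanges the two preimages $x_1,x_2$ of the node. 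Writing a line bundle on $Y$ as $(L,\lambda)$ with $\lambda\in\mathbb{C}^*$ the gluing $L_{x_1}\to L_{x_2}$, one checks that pulling back by $\tilde\sigma$ replaces $\lambda$ by $\lambda^{-1}$ (because the branches are swapped) and dualizing replaces it by $\lambda^{-1}$ again, so the composite $M\mapsto\sigma^*M^\vee$ acts on the gluing parameter by a constant rescaling; since the isomorphism $\tilde\sigma^*L\cong L^\vee$ is only fixed up to scalar, the Prym condition imposes no constraint on $\lambda$, and the entire fibre $\mathbb{P}^1$ lies in the Prym. In the two-node case the involution swaps the two gluing coordinates $(\lambda,\mu)$, and the fixed locus in each $\mathbb{P}^1\times\mathbb{P}^1$ is a one-dimensional rational curve $\mu=c\lambda^{\pm1}$. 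This explicit fibrewise computation is the content of the cited proof and is what your write-up is missing.
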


	\section{Birational geometry}
	
	\begin{definition}
		Let  $V$ and $V'$ be two varieties and let $k$ be a positive integer. A \textit{$k$-birational} morphism between $V$ and $V'$ is an isomorphism $\phi : U \xrightarrow{\sim} U'$ between two open subsets $U\subset V$ and $U'\subset V'$ such that
		\begin{align*}
			\mathrm{codim}(V\setminus U) &\geq k\\
			\mathrm{codim}(V'\setminus U') &\geq k.
		\end{align*}	
	We say that the varieties $V$ and $V'$ are $k$-birational if there exists a $k$-birational morphism between them.
	\end{definition}
Observe that when $k=1$, a $1$-birational map is exactly the same as a birational map, i.e. two varieties $V$ and $V'$ are birational if they are at least $1$-birational. There are some $k$-birational invariants of a variety which are not invariants under a birational map. For example, if two normal connected varieties $V$ and $V'$ are $2$-birational then by Hartog's theorem we have an isomorphism $H^0(V,\mathcal{O}_V) \cong H^0(V',\mathcal{O}_{V'})$, but this is not true when they are only birational.
\begin{prop}\label{pic}
	Let $\mathcal{M}$ and $\mathcal{M}'$ be two normal quasi-projective varieties. If $\mathcal{M}$ and $\mathcal{M}'$ are $2$-birational then $\mathrm{Pic}(\mathcal{M}) \cong \mathrm{Pic}(\mathcal{M}')$ and $\pi_1(\mathcal{M}) \cong \pi_1(\mathcal{M}')$. 
\end{prop}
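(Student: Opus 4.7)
My strategy is to show that for both $\mathcal{M}$ and $\mathcal{M}'$ the restriction maps $\mathrm{Pic}(\mathcal{M}) \to \mathrm{Pic}(U)$ and $\pi_1(U) \to \pi_1(\mathcal{M})$ are isomorphisms, and then to transport along $\phi : U \xrightarrow{\sim} U'$ to conclude. Let $Z := \mathcal{M}\setminus U$ and $Z' := \mathcal{M}'\setminus U'$, both closed of codimension $\geq 2$.

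For the Picard group, let $j : U \hookrightarrow \mathcal{M}$ denote the inclusion. Normality of $\mathcal{M}$ combined with $\mathrm{codim}\,Z \geq 2$ gives $j_*\mathcal{O}_U = \mathcal{O}_\mathcal{M}$ by Hartogs, and more generally extends reflexive sheaves uniquely across $Z$. Injectivity of $\mathrm{Pic}(\mathcal{M}) \to \mathrm{Pic}(U)$ is then immediate: a trivialization $s$ of $L|_U$ and its inverse $s^{-1}$ both extend by Hartogs to global sections of $L$ and $L^\vee$, and their product equals $1$ on the dense open $U$ and hence on $\mathcal{M}$, so $L$ is trivial. For surjectivity, given $M \in \mathrm{Pic}(U)$, the pushforward $j_*M$ is a reflexive rank-one sheaf on $\mathcal{M}$ restricting to $M$, and I would argue that it is locally free by examining the local class groups of $\mathcal{M}$ at the points of $Z$ (invoking local factoriality of $\mathcal{M}$).

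For the fundamental group, I reduce to a smooth setting. Since $\mathcal{M}$ is normal, the singular locus $\mathcal{M}^{\mathrm{sing}}$ has codimension $\geq 2$, and $\phi$ preserves smoothness (being an isomorphism of schemes), so replacing $U$ by $V := U \cap \mathcal{M}^{\mathrm{sm}}$ (and $U'$ by $V' := U' \cap \mathcal{M}'^{\mathrm{sm}}$) keeps the codimension bound while making both opens smooth. On the smooth complex manifold $\mathcal{M}^{\mathrm{sm}}$ the complement $\mathcal{M}^{\mathrm{sm}} \setminus V$ has real codimension $\geq 4$, and a transversality argument yields $\pi_1(V) \xrightarrow{\sim} \pi_1(\mathcal{M}^{\mathrm{sm}})$. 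The final comparison $\pi_1(\mathcal{M}^{\mathrm{sm}}) \cong \pi_1(\mathcal{M})$ for a normal variety, combined with the analogous chain on the $\mathcal{M}'$ side and the isomorphism induced by $\phi$, closes the identification.

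I expect the main obstacle to be controlling the restriction maps across the singular loci. Surjectivity of $\mathrm{Pic}(\mathcal{M}) \to \mathrm{Pic}(U)$ rests on rank-one reflexive sheaves being invertible at points of $Z$ (a local factoriality input), and the step $\pi_1(\mathcal{M}^{\mathrm{sm}}) \cong \pi_1(\mathcal{M})$ requires sufficiently mild singularities. Both are available for the moduli spaces to which the proposition will be applied, whereas everything else in the argument follows formally from the codimension hypothesis together with normality.
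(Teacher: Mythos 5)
Your proof follows the same route as the paper's: show that restriction to the open subset induces isomorphisms on $\mathrm{Pic}$ and on $\pi_1$ for each variety, then transport along $\phi$. The paper handles both restriction steps in one line each --- it asserts that every line bundle over $\mathcal{U}$ extends to a line bundle over $\mathcal{M}$ because $\mathcal{M}$ is normal and the complement has codimension $\geq 2$, and it cites \cite[p.~42]{M80} for $\pi_1(\mathcal{U})\cong\pi_1(\mathcal{M})$ --- whereas you isolate exactly what these steps require, and your caution is warranted. With normality alone both restriction maps can fail to be isomorphisms: for the affine quadric cone $\mathcal{M}=\{xy=z^{2}\}\subset\mathbb{A}^{3}$ with $\mathcal{U}$ its smooth locus, one has $\mathrm{Pic}(\mathcal{M})=0$ but $\mathrm{Pic}(\mathcal{U})\cong\mathbb{Z}/2$, and $\pi_1(\mathcal{M})=1$ but $\pi_1(\mathcal{U})\cong\mathbb{Z}/2$. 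So $j_{*}M$ is in general only a rank-one reflexive sheaf and its invertibility is indeed a local factoriality input, exactly as you say; likewise injectivity of $\pi_1(\mathcal{M}^{\mathrm{sm}})\to\pi_1(\mathcal{M})$ (equivalently, extending covers across the bad locus, a Zariski--Nagata purity statement) genuinely needs more than normality. The parts of your argument that use only the stated hypotheses --- Hartogs for injectivity of $\mathrm{Pic}(\mathcal{M})\to\mathrm{Pic}(\mathcal{U})$, the reduction to the smooth locus, and the real-codimension-$\geq 4$ transversality step --- are correct. The net assessment is that your proposal and the paper's proof share the same two unclosed steps: the paper treats them as automatic consequences of normality (which they are not), while you name the extra properties required but defer their verification. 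To be complete, either argument must check local factoriality and purity, or argue around them, for the specific moduli spaces appearing in Theorems \ref{thm1} and \ref{thm2}.
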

\begin{proof}
Since $\mathcal{M}$ and $\mathcal{M}'$ are $2$-birational, there exist open subsets $\mathcal{U}\subset \mathcal{M}$ and $\mathcal{U}' \subset \mathcal{M}'$ whose complements have codimension at least $2$ and an isomorphism $\phi : \mathcal{U} \to \mathcal{U}'$. Since $\mathcal{M}$ is a normal variety, so is the open subset $\mathcal{U}$. Also, every line bundle over $\mathcal{U}$ extends to a line bundle over $\mathcal{M}$ as $\mathrm{codim}(\mathcal{M}\setminus \mathcal{U}) \geq 2$. Therefore, $\mathrm{Pic}(\mathcal{U})\cong \mathrm{Pic}(\mathcal{M})$. Similarly, $\mathrm{Pic}(\mathcal{U}')\cong \mathrm{Pic}(\mathcal{M}')$. Hence the isomorphism $\phi$ induces
\[
\mathrm{Pic}(\mathcal{M}) \cong \mathrm{Pic}(\mathcal{U}) \cong \mathrm{Pic}(\mathcal{U}')\cong \mathrm{Pic}(\mathcal{M}').
\]
From \cite[p. $42$]{M80}, it follows that $\pi_1(\mathcal{U}) \cong \pi_1(\mathcal{M})$ and $\pi_1(\mathcal{U}') \cong \pi_1(\mathcal{M}')$. Therefore, $\phi$ induces 
\[
\pi_1(\mathcal{M}) \cong \pi_1(\mathcal{U}) \cong \pi_1(\mathcal{U}') \cong \pi_1(\mathcal{M}').
\]
\end{proof}	

In particular, if $\mathcal{M}_{\text{Sp}}(2m,\alpha,L)$ and $\mathcal{M}_{\text{Sp}}(2m',\alpha',L')$ are $2$-birational then their Picard group and the fundamental groups are same.

\begin{prop}\cite[Proposition $2.2$]{AB21}\label{keyprop}
	Let $\mathcal{M}$ and $\mathcal{M}'$ be two normal projective varieties and let $\mathrm{Pic}(\mathcal{M})\cong \mathbb{Z}$. If $\mathcal{M}$ and $\mathcal{M}'$ are $2$-birational then $\mathcal{M} \cong \mathcal{M}'$.
\end{prop}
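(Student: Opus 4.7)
The plan is to promote the partial isomorphism $\phi : \mathcal{U} \to \mathcal{U}'$ to a morphism $\mathcal{M} \to \mathcal{M}'$ by realising both varieties as closed subvarieties of a common projective space through compatible linear systems. I would fix a very ample line bundle $L'$ on $\mathcal{M}'$, giving a closed embedding $\mathcal{M}' \hookrightarrow \mathbb{P}^N$. Since $\mathcal{M}$ is normal and $\mathcal{M}\setminus \mathcal{U}$ has codimension at least $2$, the line bundle $\phi^*L'$ on $\mathcal{U}$ extends uniquely to a line bundle $\tilde{L}$ on $\mathcal{M}$, exactly as in the proof of Proposition \ref{pic}.

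Under the hypothesis $\mathrm{Pic}(\mathcal{M})\cong \mathbb{Z}$, the line bundle $\tilde{L}$ equals $H^{\otimes k}$ for some integer $k$, where $H$ is a generator chosen so that its positive powers are ample (such a choice exists because $\mathcal{M}$ is projective). I would prove that $k>0$ by a section-counting argument: Hartogs extension across the codimension-$\geq 2$ complement gives a canonical identification
\[
H^0(\mathcal{M}, \tilde{L}^{\otimes m}) \;\cong\; H^0(\mathcal{U}, \tilde{L}^{\otimes m}|_\mathcal{U}) \;\cong\; H^0(\mathcal{U}', L'^{\otimes m}|_{\mathcal{U}'}) \;\cong\; H^0(\mathcal{M}', L'^{\otimes m})
\]
for every $m \geq 0$. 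For $m=1$, the right-hand side has dimension $\geq N+1 \geq 2$, which rules out $k<0$ (an anti-ample line bundle on a projective variety has no nonzero global sections) and $k=0$ (in which case $h^0$ would equal $1$). Hence $\tilde{L}$ is ample.

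Choose $m$ large enough that $\tilde{L}^{\otimes m}$ is very ample on $\mathcal{M}$ and $L'^{\otimes m}$ is very ample on $\mathcal{M}'$. The linear system $|\tilde{L}^{\otimes m}|$ then defines a closed embedding $f : \mathcal{M} \hookrightarrow \mathbb{P}(H^0(\mathcal{M},\tilde{L}^{\otimes m}))$ which, under the identification of section spaces above, agrees on $\mathcal{U}$ with the composition $\mathcal{U} \xrightarrow{\phi} \mathcal{U}' \hookrightarrow \mathcal{M}' \hookrightarrow \mathbb{P}^N$. Since $f$ is continuous and $\mathcal{M}$ is irreducible with $\mathcal{U}$ dense, the image $f(\mathcal{M})$ coincides with $\overline{f(\mathcal{U})} = \overline{\mathcal{U}'} = \mathcal{M}'$. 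A surjective closed embedding is an isomorphism, so $f$ descends to an isomorphism $\mathcal{M} \xrightarrow{\sim} \mathcal{M}'$ extending $\phi$.

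The main obstacle I anticipate is the positivity step: verifying that $\tilde{L}$ is ample rather than trivial or anti-ample. This is precisely where both hypotheses are indispensable — the cyclic Picard group pins $\tilde{L}$ to a one-parameter family of powers of $H$ (so that "many sections" forces ampleness), while the codimension-$\geq 2$ assumption in the definition of $2$-birational is what allows Hartogs extension of both the line bundle and its sections. Once ampleness is secured, packaging the common embedding into an honest isomorphism is routine.
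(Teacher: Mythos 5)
Your proposal is correct and follows essentially the same route as the paper: extend $\phi^*\mathcal{L}'$ across the codimension-$\geq 2$ complement using normality, use $\mathrm{Pic}(\mathcal{M})\cong\mathbb{Z}$ to identify it with a power of the ample generator, identify the section spaces by Hartogs, embed both varieties compatibly in one projective space, and conclude by taking closures of $\mathcal{U}$ and $\mathcal{U}'$. The only (welcome) difference is that you explicitly justify positivity of the extended bundle via the section-counting argument and pass to a common very ample power, a detail the paper's proof asserts without elaboration.
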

\begin{proof}
	Let $\mathcal{U}\subset \mathcal{M}$ and $\mathcal{U}' \subset \mathcal{M}'$ be two open subsets such that their respective complements have codimension $\geq 2$ and let $\phi : \mathcal{U} \to \mathcal{U}'$ be an isomorphism. By Proposition \ref{pic},
	\[
	\mathrm{Pic}(\mathcal{M}) \cong \mathrm{Pic}(\mathcal{U}) \cong \mathrm{Pic}(\mathcal{U}')\cong \mathrm{Pic}(\mathcal{M}').
	\]
	Consider a very ample line bundle $\mathcal{L}'$ on $\mathcal{M}'$, and let $\mathcal{M}' \xhookrightarrow[]{} \mathbb{P}(H^0(\mathcal{M}',\mathcal{L}')^\vee)$ be a closed embedding. Since $\mathrm{Pic}(\mathcal{U}) \cong \mathrm{Pic}(\mathcal{M}) \cong \mathbb{Z}$ and $\phi^*: \mathrm{Pic}(\mathcal{U}) \to \mathrm{Pic}(\mathcal{U}')$ is an isomorphism, the line bundle $\phi^*(\left.\mathcal{L}'\right|_{\mathcal{U}'})$ over $\mathcal{U}$ can be uniquely extended to a very ample line bundle $\mathcal{L}$ over $\mathcal{M}$. Therefore, we obtain a closed embedding $\mathcal{M} \xhookrightarrow[]{} \mathbb{P}(H^0(\mathcal{M},\mathcal{L})^\vee)$. Since the complement of $\mathcal{U}$ in $\mathcal{M}$ has codimension at least $2$ and $\mathcal{M}$ is normal, by Hartog's theorem $H^0(\mathcal{U},\mathcal{L}) = H^0(\mathcal{M},\mathcal{L})$. Similarly, $H^0(\mathcal{U}',\mathcal{L}') = H^0(\mathcal{M}',\mathcal{L}')$. Therefore $\phi$ induces an isomorphism
	\[
	\mathbb{P}(H^0(\mathcal{M},\mathcal{L})^\vee) = \mathbb{P}(H^0(\mathcal{U},\mathcal{L})^\vee) \cong \mathbb{P}(H^0(\mathcal{U}',\mathcal{L}')^\vee) = \mathbb{P}(H^0(\mathcal{M}',\mathcal{L}')^\vee)
	\]
	which maps $\mathcal{U}$ to $\mathcal{U}'$. Since the closure of $\mathcal{U}$ (resp. $\mathcal{U}'$) in $\mathbb{P}(H^0(\mathcal{M},\mathcal{L})^\vee)$ (resp. $\mathbb{P}(H^0(\mathcal{M}',\mathcal{L}')^\vee)$) is $\mathcal{M}$ (resp. $\mathcal{M}'$), the morphism $\phi$ extends uniquely to an isomorphism $\mathcal{M} \cong \mathcal{M}'$.
\end{proof}	

\section{Hitchin discriminant}
 We will now discuss the Hitchin map for the moduli space of stable symplectic parabolic Higgs bundles. An element of $\mathcal{N}_{\mathrm{Sp}}(2m,\alpha,L)$ can be viewed as a stable parabolic bundle $E_*$ of rank $2m$ with a non-degenerate symplectic form $\psi$ with values in $L$, together with a morphism $\Phi: E_* \longrightarrow E_* \otimes K(D)$ which satisfies
\[
\psi(\Phi v,w) = - \psi(v,\Phi w).
\]
Let $v_i$, $v_j$ be two eigenvectors of $\Phi$ corresponding to the eigenvalues $\lambda_i$ and $\lambda_j$ respectively. Then
\begin{align*}
	\lambda_i\psi (v_i,v_j) =  \psi(\lambda_iv_i,v_j) =\psi (\Phi v_i,v_j) = -\psi(v_i,\Phi v_j) = -\lambda_j\psi(v_i,v_j).
\end{align*}
Therefore, $\psi(v_i,v_j) = 0$ unless $\lambda_i = - \lambda_j$. Hence, it follows from the nondegeneracy of the symplectic form $\psi$ that if $\lambda_i$ is an eigenvalue of $\Phi$ then so is $-\lambda_i$. Assuming all eigenvalues are distinct, the characteristic polynomial of $\Phi$ has the form
\[
\det(\lambda - \Phi) = \lambda^{2m} + s_2\lambda^{2m-2} + \cdots + s_{2m},
\]
where $s_{2i} = \mathrm{tr}(\wedge^{2i}\Phi) \in H^0(X,K^{2i}(D^{2i}))$ for all $1 \leq i \leq m$. Since $\Phi$ is strongly parabolic, its residue at each point in $D$ is nilpotent and hence $s_{2i} \in H^0(X, K^{2i}(D^{2i-1}))$. Therefore, the Hitchin map is given by
\begin{align*}
	h : \mathcal{N}_{\mathrm{Sp}}(2m,\alpha,L) &\longrightarrow \mathcal{A} \coloneqq \bigoplus_{i=1}^{m} H^0(X, K^{2i}(D^{2i-1}))\\
	(E_*,\varphi,\Phi) &\longmapsto (s_2,s_4,\dots , s_{2m}).
\end{align*}
The dimension of the Hitchin base $\mathcal{A}$ is $m(2m+1)(g-1) + m^2n$, which is the half the dimension of the moduli space $\mathcal{N}_{\mathrm{Sp}}(2m,\alpha,L)$. Also, we consider the restriction map 
\[
h_0 : T^*\mathcal{M}_{\textnormal{Sp}}(2m,\alpha,L) \longrightarrow \mathcal{A}.
\]

Let $\mathcal{S}$ be the total space of the line bundle $K(D)$ and let $p : \mathcal{S} \to X$ be the natural projection. Let $t \in H^0(\mathcal{S},p^*K(D))$ be the tautological section of $p^*K(D)$. Given $s=(s_2,\dots, s_{2m}) \in \mathcal{A}$, the \textit{spectral curve} $X_s$ in $\mathcal{S}$ is defined by
\[
t^{2m} + s_2 t^{2m-2} + \cdots + s_{2m}=0.
\]
Since all exponents of $t$ in the above equation are even, the spectral curve $X_s$ possesses an involution $\sigma(t) = -t$. Therefore, we have a $2$-fold covering map $q: X_s \to X_s/\sigma$. When the spectral curve $X_s$ is smooth, the fiber $h^{-1}(s)$ is identified with the Prym variety $\mathrm{Prym}(X_s, X_s/\sigma)=\{M \in \mathrm{Jac}(X_s) : \sigma^*M \cong M^\vee \}$ \cite[Theorem 4.1]{R20}.

Let $\mathcal{D} \subset \mathcal{A}$ be the divisor consisting of the characteristic polynomials whose corresponding spectral curve is singular. The inverse image $h^{-1}(\mathcal{D})$ is called the \textit{Hitchin discriminant}. If a spectral curve is singular over a point $x \in D$, it is singular precisely at $(x,0)$. Consider the following subsets of $\mathcal{D}$:
\begin{enumerate}
	\item For each parabolic point $x \in D$, let $\mathcal{D}_x$ denote the set of points whose spectral curve is singular over $x$.
    \item Let $\mathcal{D}_1$ denote the set of points whose spectral curve is smooth over every $x\in D$ but singular at some $(y,0)$, where $y \notin D$.
    \item Let $\mathcal{D}_2$ denote the set of points whose spectral curve has two symmetrical nodes (i.e. $t^m + s_2t^{m-1} + \cdots + s_{2m-2}t + s_{2m} = 0$ has a node on a point $(y,t)$ for some $t \ne 0$).
\end{enumerate}
Therefore, $$\mathcal{D} = \underset{x \in D}{\bigcup}\mathcal{D}_x \cup \overline{\mathcal{D}_1} \cup \overline{\mathcal{D}_2},$$ where $\overline{\mathcal{D}_i}$'s are the closure of $\mathcal{D}_i$ in $\mathcal{D}$ for $i=1,2$. 

Since $$\mathcal{D}_x = \bigoplus\limits_{i=1}^{m-1}H^0(K^{2i}D^{2i-1}) \oplus H^0(K^{2m}D^{2m-1}(-x)),$$ it is irreducible for all $x\in D$. 

The set $\overline{\mathcal{D}_1}$ consists of points whose spectral curve is singular at some $(y,0)$ where $y \notin D$ (but not necessarily smooth over $D$). By \cite[Proposition $4.1$]{AG19}, $\overline{\mathcal{D}_1}$ is irreducible. 

By a similar argument as in \cite[Proposition $3.2$]{BGM12}, we can conclude that $\overline{\mathcal{D}_2}$ is irreducible in $\mathcal{D}$.

Generically the singularities in $\mathcal{D}_1$ are nodes and let $\mathcal{D}_1^\circ \subset \mathcal{D}_1$ denote the locus of nodal curves with exactly one node over $y\notin D$. Also, let $\mathcal{D}_2^\circ \subset \mathcal{D}_2$ denote the locus of the curves which do not contain extra singularities.
	
	\begin{prop}\label{4}
		Let $g \geq 4$ and $\mathcal{U} \subset \mathcal{M}_{\mathrm{Sp}}(2m,\alpha,L)$ be an open subset such that the complement has codimension $\geq 3$. Then the complement of $T^*\mathcal{U} \cap h^{-1}(\mathcal{D}_i)$ inside $h^{-1}(\mathcal{D}_i)$ has codimension at least $2$.
	\end{prop}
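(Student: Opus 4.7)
The plan is a short dimension count exploiting two facts: the bad locus lies in the cotangent-projection preimage of a codimension $\geq 3$ set, and $h^{-1}(\mathcal{D}_i)$ is a divisor of $T^*\mathcal{M}_{\mathrm{Sp}}(2m,\alpha,L)$. Put $Z = \mathcal{M}_{\mathrm{Sp}}(2m,\alpha,L)\setminus\mathcal{U}$, so by hypothesis $\mathrm{codim}(Z,\mathcal{M}_{\mathrm{Sp}}(2m,\alpha,L))\geq 3$. Write $\pi : T^*\mathcal{M}_{\mathrm{Sp}}(2m,\alpha,L)\to\mathcal{M}_{\mathrm{Sp}}(2m,\alpha,L)$ for the cotangent projection; its fibres all have dimension $\dim\mathcal{M}_{\mathrm{Sp}}(2m,\alpha,L)$, and the complement $T^*\mathcal{M}_{\mathrm{Sp}}(2m,\alpha,L)\setminus T^*\mathcal{U}$ is exactly $\pi^{-1}(Z)$. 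Consequently
\[
\dim\pi^{-1}(Z) \;\leq\; \dim Z + \dim\mathcal{M}_{\mathrm{Sp}}(2m,\alpha,L) \;\leq\; 2\dim\mathcal{M}_{\mathrm{Sp}}(2m,\alpha,L) - 3.
\]

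Next I would pin down the dimension of $h^{-1}(\mathcal{D}_i)\subset T^*\mathcal{M}_{\mathrm{Sp}}(2m,\alpha,L)$. Each $\mathcal{D}_i$ is irreducible of codimension $1$ in the Hitchin base $\mathcal{A}$: $\mathcal{D}_x$ is literally the linear hyperplane cut out by $s_{2m}(x)=0$, while $\overline{\mathcal{D}_1}$ and $\overline{\mathcal{D}_2}$ are irreducible components of the discriminant divisor $\mathcal{D}$ (and hence codimension $1$). The symplectic Hitchin map is equidimensional with fibre dimension $\dim\mathcal{A}=\dim\mathcal{M}_{\mathrm{Sp}}(2m,\alpha,L)$: over a generic $s\in\mathcal{D}_i$ the spectral curve $X_s$ acquires only a node (or a symmetric pair of nodes), and the corresponding fibre of $h$ is the compactified Prym, whose dimension agrees with that of the smooth Prym by Proposition \ref{3}. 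Hence
\[
\dim h^{-1}(\mathcal{D}_i) \;=\; (\dim\mathcal{A}-1)+\dim\mathcal{A} \;=\; 2\dim\mathcal{M}_{\mathrm{Sp}}(2m,\alpha,L)-1.
\]
Since $h^{-1}(\mathcal{D}_i)\setminus T^*\mathcal{U} = \pi^{-1}(Z)\cap h^{-1}(\mathcal{D}_i)$, its dimension is bounded by $\dim\pi^{-1}(Z)\leq 2\dim\mathcal{M}_{\mathrm{Sp}}(2m,\alpha,L)-3$, yielding codimension at least $2$ inside $h^{-1}(\mathcal{D}_i)$.

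The main obstacle is the equidimensionality claim: a priori some irreducible component of $h^{-1}(\mathcal{D}_i)$ could have dimension less than $2\dim\mathcal{M}_{\mathrm{Sp}}(2m,\alpha,L)-1$, which would spoil the subtraction above. For $\overline{\mathcal{D}_1}$ and $\overline{\mathcal{D}_2}$ this is handled by the compactified-Prym description invoked in Proposition \ref{3}, together with the fact that the generic singular fibres are reductions of smooth Pryms of the normalization by a $\mathbb{P}^1$- or $\mathbb{P}^1\times\mathbb{P}^1$-worth of data. For $\mathcal{D}_x$ one must analyse spectral curves with a singularity at the parabolic point $(x,0)$ and verify via the spectral correspondence for strongly parabolic Higgs bundles (as in \cite{R20}) that the corresponding compactified Prym still has dimension $\dim\mathcal{A}$. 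Once equidimensionality is secured, the proposition follows immediately from the dimension estimate above.
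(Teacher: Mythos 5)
Your dimension count for $\pi^{-1}(Z)\cap h^{-1}(\mathcal{D}_i)$ is the same as the paper's, but there is a genuine gap in your identification of the complement: you write $h^{-1}(\mathcal{D}_i)\setminus T^*\mathcal{U} = \pi^{-1}(Z)\cap h^{-1}(\mathcal{D}_i)$, which tacitly assumes $h^{-1}(\mathcal{D}_i)\subset T^*\mathcal{M}_{\mathrm{Sp}}(2m,\alpha,L)$. In the paper the Hitchin map $h$ is defined on the full moduli space $\mathcal{N}_{\mathrm{Sp}}(2m,\alpha,L)$ of stable symplectic parabolic Higgs bundles, which strictly contains $T^*\mathcal{M}_{\mathrm{Sp}}(2m,\alpha,L)$ as an open subset (the locus where the underlying parabolic bundle is itself stable). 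So the complement of $T^*\mathcal{U}\cap h^{-1}(\mathcal{D}_i)$ in $h^{-1}(\mathcal{D}_i)$ has a second piece, namely $\mathcal{Z}\cap h^{-1}(\mathcal{D}_i)$ with $\mathcal{Z}=\mathcal{N}_{\mathrm{Sp}}(2m,\alpha,L)\setminus T^*\mathcal{M}_{\mathrm{Sp}}(2m,\alpha,L)$, which your argument never controls. This is exactly where the hypothesis $g\geq 4$ enters: the paper invokes Faltings' estimate \cite[Theorem II.6(iii)]{F93} to get $\mathrm{codim}\,\mathcal{Z}\geq 3$, hence $\dim(\mathcal{Z}\cap h^{-1}(\mathcal{D}_i))\leq 2N-3$, and only then does the union of the two bad pieces have dimension $\leq 2N-3$. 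The fact that your proof nowhere uses $g\geq 4$ is the symptom of this omission. (This matters downstream: Proposition \ref{2} needs the full fibre $h^{-1}(s)$ in $\mathcal{N}_{\mathrm{Sp}}$ — the uniruled compactified Prym of Proposition \ref{3} — to meet $T^*\mathcal{U}$ in a complement of codimension $\geq 2$, so one cannot simply redefine $h^{-1}(\mathcal{D}_i)$ to live inside the cotangent bundle.)

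On the positive side, your worry about equidimensionality of $h^{-1}(\mathcal{D}_i)$ is well placed: the paper's final line silently uses $\dim h^{-1}(\mathcal{D}_i)=2N-1$, and for the codimension conclusion one really needs every irreducible component of $h^{-1}(\mathcal{D}_i)$ to have dimension $2N-1$; your sketch of how to verify this via the compactified Prym description is a genuine refinement of the published argument rather than a deviation from it. Fix the missing $\mathcal{Z}$ term and the proof is complete.
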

\begin{proof}
	Let $\mathcal{W}= \mathcal{M}_{\mathrm{Sp}}(2m,\alpha,L) \setminus \mathcal{U}$ and let $N=\dim \mathcal{M}_{\mathrm{Sp}}(2m,\alpha,L)$. Therefore, $\dim \mathcal{W} \leq N-3$. Let $T^*\mathcal{W} \coloneqq \left.T^*\mathcal{M}_{\mathrm{Sp}}(2m,\alpha,L)\right|_{\mathcal{W}}$ denote the restriction of the cotangent of the moduli space to $\mathcal{W}$. Then $\dim (T^*\mathcal{W})\leq 2N-3$, and hence \[
	\dim(T^*\mathcal{W} \cap h^{-1}(\mathcal{D}_i)) \leq 2N-3.
	\]
	For $g\geq 4$, following the computations in Faltings \cite[Theorem II.6(iii)]{F93}, we know that the complement $\mathcal{N}_{\text{Sp}}(2m,\alpha,L) \setminus T^*\mathcal{M}_{\text{Sp}}(2m,\alpha,L)$ has codimension at least $3$. Equivalently,
	\[
	\dim (\mathcal{N}_{\text{Sp}}(2m,\alpha,L) \setminus T^*\mathcal{M}_{\text{Sp}}(2m,\alpha,L)) \leq 2N - 3.
	\]
	Thus, if we denote $\mathcal{Z} = \mathcal{N}_{\text{Sp}}(2m,\alpha,L) \setminus T^*\mathcal{M}_{\text{Sp}}(2m,\alpha,L)$ then
	\[
	\dim (\mathcal{Z} \cap h^{-1}(\mathcal{D}_i)) \leq 2N - 3.
	\]
	Hence,
	\begin{align*}
	\dim (h^{-1}(\mathcal{D}_i)\setminus (T^*\mathcal{U} \cap h^{-1}(\mathcal{D}_i))) &= \dim ((\mathcal{Z}\cap h^{-1}(\mathcal{D}_i))\cup (T^*\mathcal{W} \cap h^{-1}(\mathcal{D}_i)) ) \\
	&\leq 2N - 3 \\
	&= \dim h^{-1}(\mathcal{D}_i) - 2 
	\end{align*}
\end{proof}

\begin{prop}\label{2}
	Let $g \geq 4$ and $\mathcal{U} \subset \mathcal{M}_{\mathrm{Sp}}(2m,\alpha,L)$ be an open subset such that the complement has codimension $\geq 3$. Let $\mathcal{R}_\mathcal{U} \subset T^*\mathcal{U}$ be the union of (complete) rational curves in $T^*\mathcal{U}$. Then $\mathcal{D}$ is the closure of $h(\mathcal{R}_{\mathcal{U}})$ in $\mathcal{A}$.
\end{prop}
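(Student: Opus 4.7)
The plan is to prove both inclusions $\overline{h(\mathcal{R}_{\mathcal{U}})} \subset \mathcal{D}$ and $\mathcal{D} \subset \overline{h(\mathcal{R}_{\mathcal{U}})}$ separately, following the strategy of \cite{BGM12, AG19, R21} in the non-symplectic parabolic setting. The first inclusion is the easier direction: for $s \in \mathcal{A} \setminus \mathcal{D}$ the spectral curve $X_s$ is smooth and $h^{-1}(s)$ is the Prym variety $\mathrm{Prym}(X_s, X_s/\sigma)$, in particular an abelian variety. Every morphism $\mathbb{P}^1 \to A$ into an abelian variety is constant, so no complete rational curve can lie in such an $h^{-1}(s)$. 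Hence every complete rational curve in $T^*\mathcal{U} \subset \mathcal{N}_{\mathrm{Sp}}(2m,\alpha,L)$ is carried by $h$ into the closed subset $\mathcal{D}$, and taking closures yields $\overline{h(\mathcal{R}_{\mathcal{U}})} \subset \mathcal{D}$.

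For the reverse inclusion it suffices to exhibit a Zariski dense subset of each irreducible component of $\mathcal{D}$ inside $h(\mathcal{R}_{\mathcal{U}})$. The components are the $\mathcal{D}_x$ for $x \in D$ together with $\overline{\mathcal{D}_1}$ and $\overline{\mathcal{D}_2}$. Over the generic point $s$ of any of these components the spectral curve $X_s$ is integral and carries the involution $\sigma$, with either a single node (over a point of $D$ or outside $D$) or a symmetric pair of nodes interchanged by $\sigma$. Proposition \ref{3} then furnishes a uniruling of the compactified Jacobian $\bar{J}(X_s)$ and, by passage to the fixed locus of $\sigma^*$, of the Hitchin fiber $h^{-1}(s) \cong \mathrm{Prym}(X_s, X_s/\sigma)$. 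In particular through a general point of $h^{-1}(s)$ passes a complete rational curve in $\mathcal{N}_{\mathrm{Sp}}(2m,\alpha,L)$. Proposition \ref{4} guarantees that the locus $h^{-1}(\mathcal{D}_i) \setminus T^*\mathcal{U}$ has codimension at least two inside $h^{-1}(\mathcal{D}_i)$, so inside the positive-dimensional covering family of rational curves sweeping out the generic fibers, a general member avoids this codimension-two locus and therefore lies in $T^*\mathcal{U}$. Letting $s$ vary in the generic part of each component, the $h$-image of these curves is dense in $\mathcal{D}_i$, and summing over all components gives $\mathcal{D} \subset \overline{h(\mathcal{R}_{\mathcal{U}})}$.

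The step I expect to require the most care is the passage from \emph{uniruled generic fiber} to \emph{complete rational curve actually contained in $T^*\mathcal{U}$}. Proposition \ref{3} only yields, abstractly, a covering family of rational curves in each fiber, and one must invoke the full codimension-two estimate of Proposition \ref{4} together with a dimension count on the incidence variety of curves meeting the bad locus to conclude that a general member of the family stays inside $T^*\mathcal{U}$. A related subtlety concerns the case $\mathcal{D}_x$, where the node of $X_s$ sits over a parabolic point: the identification of $h^{-1}(s)$ with a Prym variety and the analysis of its birational geometry require that the general statement of Proposition \ref{3} be applied to an integral nodal curve coming from a parabolic spectral cover, as in \cite[pp.\ 10--11]{BGM12} and its parabolic extensions in \cite{AG19, R21}; this should transfer to the symplectic parabolic setting without essential change.
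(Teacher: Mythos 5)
Your first inclusion and your treatment of the components $\overline{\mathcal{D}_1}$ and $\overline{\mathcal{D}_2}$ agree with the paper's proof: a complete rational curve is collapsed by $h$ because $\mathcal{A}$ is affine, it cannot lie in the (open subset of an) abelian variety $h_{\mathcal{U}}^{-1}(s)$ for $s\in\mathcal{A}\setminus\mathcal{D}$, and over $\mathcal{D}_1^{\circ}$ and $\mathcal{D}_2^{\circ}$ the paper likewise combines the uniruledness statement of Proposition \ref{3} with the codimension-two estimate of Proposition \ref{4} to find a complete rational curve inside $h_{\mathcal{U}}^{-1}(s)$.

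The gap is in the components $\mathcal{D}_x$, $x\in D$, which you acknowledge as a subtlety but then assert ``should transfer without essential change.'' Proposition \ref{3} does not apply there, for two reasons. First, for generic $s\in\mathcal{D}_x$ the singular point $(x,0)$ of $X_s$ is not a simple node in the symplectic setting: writing $s_{2i}=z\,u_{2i}(z)$ and $s_{2m}=z^2u_{2m}(z)$ in a local trivialization of $K(D)$ near $x$, the spectral equation is $t^{2m}+\cdots+z\,u_{2m-2}(z)\,t^2+z^2u_{2m}(z)=0$, whose quadratic part is the rank-one form $u_{2m}(0)\,z^2$ (there is no $zt$ term because all odd coefficients vanish), so the singularity is a tacnode rather than a node. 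Second, and independently, the identification of the Hitchin fiber with (the Prym inside) the compactified Jacobian of the singular spectral curve is not available when the singularity sits over a parabolic point: the nilpotency of the residue of a strongly parabolic Higgs field interacts with the flag at $x$, and neither \cite[Theorem 4.1]{R20} nor Proposition \ref{3} describes $h^{-1}(s)$ in that case. The paper instead produces the rational curves over $\mathcal{D}_x$ by an explicit construction: for $(E_*,\psi)$ in the open set $\mathcal{V}'=\mathcal{U}\cap\mathcal{V}\cap\mathcal{W}$ --- built from Proposition \ref{prop1}, Lemma \ref{lemma1}, and an auxiliary open set $\mathcal{W}$ avoiding the one-dimensional families of flag modifications emanating from the codimension-$\geq 3$ complement of $\mathcal{U}$, which is precisely where the hypothesis $\mathrm{codim}\geq 3$ (rather than $2$) is used for this component --- a Higgs field with $z^2\mid\det\Phi(z)$ remains strongly parabolic for the whole $\mathbb{P}^1$ of allowable replacements of the $i$-th step of the flag of $E|_x$, and this $\mathbb{P}^1$ is a complete rational curve lying in a single fiber over $\mathcal{D}_x$ and entirely inside $T^*\mathcal{U}$. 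Without some version of this argument (or a genuine spectral description of the fibers over $\mathcal{D}_x$), your proof of the inclusion $\mathcal{D}_x\subset\overline{h(\mathcal{R}_{\mathcal{U}})}$ does not go through.
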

\begin{proof}
	The proof is analogous to the proof of \cite[Lemma $8.3$]{AG19}. Let $h_{\mathcal{U}} : T^*\mathcal{U} \longrightarrow \mathcal{A}$ be the restriction of the Hitchin map $h$ to $T^*\mathcal{U}$. Let $l \cong \mathbb{P}^1 \xhookrightarrow{} T^*\mathcal{U}$ be a complete rational curve. So $h_{\mathcal{U}}(l) \subset \mathcal{A}$ is a point as $l$ is a complete curve. Therefore $l$ must be contained in a fiber of the Hitchin map. If $s \in \mathcal{A}\setminus \mathcal{D}$, then the fiber $h^{-1}(s)$ is an abelian variety (a Prym variety). So, $h_{\mathcal{U}}^{-1}(s)$ is an open subset of an abelian variety. Therefore,  $l$ cannnot be contained in a fiber over $\mathcal{A} \setminus \mathcal{D}$. Hence, it is enough to show that for $s\in \mathcal{D}^\circ_i$ and a generic $s\in \mathcal{D}_x$ the fiber $h_{\mathcal{U}}^{-1}(s)$ contains a complete rational curve.
	
	By Proposition \ref{4}, $(\mathcal{M}_{\mathrm{Sp}}(2m,\alpha,L)\setminus T^*\mathcal{U})\cap h^{-1}(\mathcal{D}_i)$ has codimension at least $2$ in $h^{-1}(\mathcal{D}_i)$. So for $s\in \mathcal{D}_i^\circ$,
	\[
	h^{-1}(s) - h_{\mathcal{U}}^{-1}(s) \subset h^{-1}(s)
	\]
	has codimension at least $2$. Therefore by Proposition \ref{3}, $h_{\mathcal{U}}^{-1}(s)$ is an open subset of an uniruled variety. Since the complement of $h_{\mathcal{U}}^{-1}(s)$ has codimension at least $2$, it contains a complete rational curve.
	
	So it remains to show that a generic fiber over $\mathcal{D}_x$ contains a complete rational curve. Let $\mathcal{V} \subset \mathcal{M}_{\textnormal{Sp}}(2m,\alpha,L)$ be the intersection of two open subsets defined by Proposition \ref{prop1} and Lemma \ref{lemma1}, i.e. the elements of $\mathcal{V}$ are $(1,0)$-stable symplectic parabolic bundles $(E_*,\psi)$ such that $H^0(\operatorname{PEnd}_{\mathrm{Sp}}(E_*)(x))=0$. For every $1\leq i < 2m$ and every $(E_*,\psi) \in \mathcal{Y} = \mathcal{M}_{\textnormal{Sp}}(2m,\alpha,L) \setminus \mathcal{U}$, consider the family of quasi-parabolic bundles obtained by replacing the $i$-th step of the flag of $\left.E\right|_x$ to all allowable (i.e. the modified parabolic bundle with the given symplectic and Higgs structures is an element of the moduli space) subspaces $E'_{x,i}$ such that
	\begin{equation}\label{filtration}
	E_{x,i+1} \subsetneq E'_{x,i} \subsetneq E_{x,i-1}
	\end{equation}
	Let us consider the union of all stable points in such families. Since the codimension of $\mathcal{Y}$ in $\mathcal{M}_{\textnormal{Sp}}(2m,\alpha,L)$ is $\geq 3$ and the families above are at most of dimension $1$, the codimension of the union of all such families must be positive. Therefore, there exist an open subset $\mathcal{W} \subset \mathcal{M}_{\textnormal{Sp}}(2m,\alpha,L)$ which consists of points outside the image of all the families. Consider the open set $\mathcal{V}' = \mathcal{U} \cap \mathcal{V} \cap \mathcal{W}$ of $\mathcal{M}_{\mathrm{Sp}}(2m,\alpha,L)$. Now if we repeat the argument given in \cite[Proposition $4.2$]{AG19} (see \cite[Proposition $3$]{R21} for the symplectic case) by replacing the open subset $\mathcal{V}$ with $\mathcal{V}'$, we can conclude that the generic fibers over $\mathcal{D}_x$ contain a complete rational curve. For the convenience of the readers, we only give a sketch of the argument given in \cite[Proposition $4.2$]{AG19} or \cite[Proposition $3$]{R21}. For any element $(E_*,\psi) \in \mathcal{V}'$ and $x \in D$, the evaluation map 
	\[
	\text{ev} : H^0(\textnormal{PEnd}_{\mathrm{Sp}}(E_*)\otimes K(D)) \to \textnormal{PEnd}_{\mathrm{Sp}}(E_*)\otimes \left.K(D)\right|_x
	\]
	is surjective by Serre duality (since $(E_*,\psi) \in \mathcal{V}$). For $1 < i \leq 2m$, let $N_i(E_*) \subset \textnormal{PEnd}_{\mathrm{Sp}}(E_*)\otimes \left.K(D)\right|_x$ consists of matrices with a zero at $(i-1,i)$. For $i=1$, let $N_1(E_*)$ consists of matrices with a zero at $(2m,1)$. Suppose $\tilde{N}_i(E_*) \coloneqq \textnormal{ev}^{-1}(N_i(E_*))$. Let $E_{*_i}$ denote the parabolic bundle obtained from $E_*$ by removing the subspace $E_{x,i}$ of $\left.E\right|_x$. Then 
	\[
	\tilde{N}_i(E_*) = H^0(\textnormal{PEnd}_{\mathrm{Sp}}(E_{*_i}) \otimes K(D)).
	\]
	
	Also, $(E_*,\psi,\Phi) \in h^{-1}(\mathcal{D}_x)$ if and only if $z^2 | \det(\Phi(z))$, where $\Phi$ is the Higgs field and $z$ is the coordinate around $x\in D$.  Therefore, $z^2|\det(\Phi(z))$ if and only if $\text{ev}(\Phi) \in N_i(E_*)$ for some $1< i \leq 2m$. Since $\text{ev}$ is surjective for all $(E_*,\psi)\in \mathcal{V}'$,
	\[
	h^{-1}(\mathcal{D}_x) \cap T_{(E_*,\psi)}^*\mathcal{M}_{\textnormal{Sp}}(2m,\alpha,L) = \bigcup_{i=1}^{2m}\tilde{N}_i(E_*)
	\]
	Let $(E_{*'},\psi)$ be the stable symplectic parabolic bundle with an allowable filtration (as in \ref{filtration})
	\[
	E_{x,i-1} \supsetneq E_{x,i}' \supsetneq E_{x,i+1}
	\] 
	for all $x\in D$ and all $1< i< 2m$. Since $\Phi$ maps $E_{x,i-1}$ to $E_{x,i+1}$, we have $\Phi \in H^0(\textnormal{PEnd}_{\mathrm{Sp}}(E_{*'})\otimes K(D))$ for all such allowable subspaces $E_{x,i}'$. Therefore $E_{*'} \in h_{\mathcal{U}}^{-1}(\mathcal{D}_x)$ for all such $E_{x,i}'$ and they actually lie in the same fiber. The space of possible compatible steps in this filtration is parametrized by $\mathbb{P}^1$, and hence they form a complete rational curve.

\end{proof}

\begin{prop}\label{1}
Let $\mathcal{U} \subset \mathcal{M}_{\mathrm{Sp}}(2m,\alpha,L)$ be an open subset such that the complement has codimension $\geq 2$. The global algebraic functions $\Gamma(T^*\mathcal{U})$ produce a map
\[
\tilde{h} : T^*\mathcal{U}  \longrightarrow \mathrm{Spec}(\Gamma(T^*\mathcal{U}))\cong \mathcal{A} \cong \mathbb{C}^N,
\]
which is the restriction of the Hitchin map to $T^*\mathcal{U}$ upto an automorphism of $\mathbb{C}^N$, where $N=\dim \mathcal{A}$. Moreover, if we consider the standard dilation action of $\mathbb{C}^*$ on the fibers of the cotangent bundle $T^*\mathcal{U}$, then there is a unique $\mathbb{C}^*$-action on $\mathcal{A}$ such that $\tilde{h}$ is $\mathbb{C}^*$-equivariant, i.e. $\tilde{h}(E_*, \lambda\Phi)= \lambda \cdot \tilde{h}(E_*,\Phi)$.
\end{prop}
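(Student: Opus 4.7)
The strategy is to identify $\Gamma(T^*\mathcal{U})$ with the coordinate ring of the Hitchin base via a chain of equalities, so that the canonical map to the spectrum is, up to a linear change of coordinates, the Hitchin map itself. First I would apply Hartogs' theorem twice. Since $\mathcal{M}_{\mathrm{Sp}}(2m,\alpha,L)$ is normal and $\mathcal{M}_{\mathrm{Sp}}\setminus \mathcal{U}$ has codimension $\geq 2$, the complement of $T^*\mathcal{U}$ inside $T^*\mathcal{M}_{\mathrm{Sp}}$ (understood as the cotangent bundle of the smooth locus, which is dense) also has codimension $\geq 2$, hence $\Gamma(T^*\mathcal{U}) = \Gamma(T^*\mathcal{M}_{\mathrm{Sp}})$. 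The Faltings estimate already used in Proposition \ref{4} yields that $T^*\mathcal{M}_{\mathrm{Sp}}$ is open in $\mathcal{N}_{\mathrm{Sp}}$ with complement of codimension $\geq 3$, so a second application of Hartogs (using normality of $\mathcal{N}_{\mathrm{Sp}}$) gives $\Gamma(T^*\mathcal{M}_{\mathrm{Sp}}) = \Gamma(\mathcal{N}_{\mathrm{Sp}})$.

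Next I would invoke properness of the Hitchin map. The map $h:\mathcal{N}_{\mathrm{Sp}} \to \mathcal{A}$ is proper and surjective with irreducible generic fiber (the Prym variety), and $\mathcal{N}_{\mathrm{Sp}}$ is irreducible, so Stein factorization gives $h_*\mathcal{O}_{\mathcal{N}_{\mathrm{Sp}}} = \mathcal{O}_{\mathcal{A}}$; taking global sections yields $\Gamma(\mathcal{N}_{\mathrm{Sp}}) = \mathbb{C}[\mathcal{A}]$. Combining with the previous step, $\Gamma(T^*\mathcal{U}) = \mathbb{C}[\mathcal{A}]$, and hence $\mathrm{Spec}(\Gamma(T^*\mathcal{U})) \cong \mathcal{A} \cong \mathbb{C}^N$ with $N = \dim\mathcal{A}$. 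Under any choice of linear coordinates on $\mathcal{A}$, the canonical morphism $\tilde h$ coincides with the restriction $\left. h\right|_{T^*\mathcal{U}}$; changing the coordinate system replaces $\tilde h$ by its composition with a linear automorphism of $\mathbb{C}^N$, which accounts for the indeterminacy in the statement.

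For the second half, the dilation action $(E_*,\Phi)\mapsto (E_*,\lambda\Phi)$ on $T^*\mathcal{U}$ acts on the algebra $\Gamma(T^*\mathcal{U}) = \mathbb{C}[\mathcal{A}]$, and therefore induces a $\mathbb{C}^*$-action on $\mathcal{A} = \mathrm{Spec}(\Gamma(T^*\mathcal{U}))$ with respect to which $\tilde h$ is tautologically equivariant. Uniqueness follows from dominance of $\tilde h$: the comorphism $\mathbb{C}[\mathcal{A}] \hookrightarrow \Gamma(T^*\mathcal{U})$ is injective, and any $\mathbb{C}^*$-action on $\mathcal{A}$ making $\tilde h$ equivariant is determined by the given action on the source. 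Explicitly, since $s_{2i} = \mathrm{tr}(\wedge^{2i}\Phi)$ is homogeneous of degree $2i$ in $\Phi$, the induced action scales the summand $H^0(X,K^{2i}(D^{2i-1}))$ of $\mathcal{A}$ by $\lambda^{2i}$.

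The main technical obstacle is justifying the two inputs needed in the first paragraph: normality of $\mathcal{N}_{\mathrm{Sp}}$ and properness (together with connectedness of generic fibers) of the symplectic parabolic Hitchin map. Both are established in the symplectic parabolic setting of \cite{R16} by adapting Nitsure's construction, and the Prym description of smooth fibers recalled earlier in the excerpt supplies the connectedness; once these are in hand the rest of the argument is formal.
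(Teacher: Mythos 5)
Your proposal is correct and follows essentially the same route as the paper: the key step is the single application of Hartogs' theorem giving $\Gamma(T^*\mathcal{U}) = \Gamma(T^*\mathcal{M}_{\mathrm{Sp}}(2m,\alpha,L))$, after which the paper simply cites \cite[Proposition 5]{R21} for the identification with $\mathbb{C}[\mathcal{A}]$ and the $\mathbb{C}^*$-equivariance. Your remaining paragraphs (second Hartogs via the Faltings codimension bound, properness and Stein factorization of the Hitchin map, and the homogeneity of the $s_{2i}$) are a reasonable unpacking of that cited result rather than a different argument.
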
	
\begin{proof}
	Since $\mathcal{U} \subset \mathcal{M}_{\mathrm{Sp}}(2m,\alpha,L)$ is an open subset of codimension at least $2$, the codimension of the open subset $T^*\mathcal{U} \subset T^*\mathcal{M}_{\mathrm{Sp}}(2m,\alpha,L)$ is at least $2$. Therefore, by Hartog's theorem we have
	\[
	\Gamma(T^*\mathcal{U}) = \Gamma(T^*\mathcal{M}_{\mathrm{Sp}}(2m,\alpha,L)).
	\]
	Hence, the statement follows from \cite[Proposition $5$]{R21}.
\end{proof}
The Proposition \ref{1} allows us to recover the Hitchin map up to an automorphism of the base. The $\mathbb{C}^*$-action stratifies the vector space $\mathcal{A}$ in subspaces corresponding to the points which has a rate of decay at least $|\lambda|^i$ for all $i=2,4,\dots, 2m$. In other words, the subspaces $\mathcal{A}_{\geq 2k} = \bigoplus_{i=k}^{m}\mathcal{A}_{2i}$ (where $\mathcal{A}_{2i} = H^0(X,K^{2i}(D^{2i-1}))$) are uniquely determined for each $k=1,\dots, m$. Therefore, the $\mathbb{C}^*$-action allows us to recover the subspace with maximal decay $|\lambda|^{2m}$, which corresponds to the subspace
\[
\mathcal{A}_{2m} = H^0(X, K^{2m}(D^{2m-1})) \subset \mathcal{A}.
\]

	\begin{prop}\cite[Proposition $6$]{R21}\label{5}
	The intersection $\mathcal{C} \coloneqq \mathcal{A}_{2m} \cap \mathcal{D} \subset \mathcal{A}_{2m}$ has $n+1$ irreducible components
	\[
	\mathcal{C} = \mathcal{C}_X \cup \bigcup_{x \in D} \mathcal{C}_x.
	\]
	Moreover, $\mathbb{P}(\mathcal{C}_X ) \subset \mathbb{P}(\mathcal{A}_{2m})$ is the dual variety of $X \subset \mathbb{P}(\mathcal{A}_{2m}^\vee)$ and for each $x \in D$, $\mathbb{P}(\mathcal{C}_x) \subset \mathbb{P}(\mathcal{A}_{2m})$ is the dual variety of $x \xhookrightarrow{} X \subset \mathbb{P}(\mathcal{A}_{2m}^\vee)$ for the embedding given by the linear series $|K^{2m}D^{2m-1}|$.
\end{prop}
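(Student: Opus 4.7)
My approach is to compute the spectral curve equation explicitly on the subspace $\mathcal{A}_{2m}$ and then translate its singularity into a condition on the one surviving coefficient $s_{2m}$. Since every $s \in \mathcal{A}_{2m}$ has $s_2 = s_4 = \cdots = s_{2m-2} = 0$, the spectral curve $X_s \subset \mathcal{S}$ is locally cut out by $t^{2m} + s_{2m} = 0$, so it is a cyclic $2m$-cover of $X$ totally branched along the divisor of $s_{2m}$. Thus $\mathcal{C}$ is exactly the subset of $\mathcal{A}_{2m}$ consisting of sections of $K^{2m}(D^{2m-1})$ whose associated branched cover is singular.

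The singularity analysis splits into two cases. First, over $y \in X \setminus D$: in a local coordinate $z$ around $y$, the Jacobian criterion applied to $F(z,t) = t^{2m} + s_{2m}(z)$ gives $\partial_t F = 2m\, t^{2m-1}$, which forces $t = 0$; combined with $F = 0$ and $\partial_z F = 0$ this is precisely the condition that $s_{2m}$ have a double zero at $y$. By the classical description of dual varieties, the locus of such sections is the affine cone over the dual variety $X^\vee \subset \mathbb{P}(\mathcal{A}_{2m})$ for the embedding $X \hookrightarrow \mathbb{P}(\mathcal{A}_{2m}^\vee)$ given by $|K^{2m}D^{2m-1}|$; call this locus $\mathcal{C}_X$. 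It is irreducible because $X$, and hence $X^\vee$, is irreducible. Second, over $x \in D$: writing $s_{2m}$ locally as $\tilde{s}(z)\, z^{-(2m-1)}(dz)^{2m}$ and clearing the pole in the spectral equation produces a local model $u^{2m} + z\,\tilde{s}(z) = 0$, which always passes through $(x, 0)$ and is singular there if and only if $\tilde{s}(0) = 0$. This is a single linear condition on $s_{2m}$; the resulting hyperplane $\mathcal{C}_x \subset \mathcal{A}_{2m}$ is exactly the hyperplane of sections vanishing at $x$ under the natural evaluation, i.e.\ the dual of the point $x$ for the same embedding.

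To finish, I would verify that the $n+1$ pieces listed are pairwise distinct irreducible components. The hyperplanes $\mathcal{C}_x$ are clearly irreducible and are pairwise distinct because the $n$ points of $D$ map to distinct points of the embedded curve (the linear series is very ample in the range $g \geq 4$, $m \geq 1$). The only remaining point is to exclude $\mathcal{C}_X = \mathcal{C}_x$ for some $x$: if it held, then $X^\vee \subset \mathbb{P}(\mathcal{A}_{2m})$ would itself be a linear hyperplane, and biduality $X^{\vee\vee} = X$ would force $X$ to be a single point, which is absurd. The main technical obstacle throughout is the bookkeeping of the divisor $D$ at the parabolic points — in particular, checking that the local condition $\tilde{s}(0) = 0$ really coincides with the evaluation map cutting out the dual of $x$ under $|K^{2m}D^{2m-1}|$ — but this reduces to a direct local coordinate computation.
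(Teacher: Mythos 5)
Your proposal is correct, but note that the paper itself gives no proof of this statement: it is quoted verbatim from \cite[Proposition 6]{R21}, and the only thing the paper adds is the explicit description $\mathcal{C}_X = \bigcup_{y\in X} H^0(K^{2m}D^{2m-1}(-2y))$ and $\mathcal{C}_x = H^0(K^{2m}D^{2m-1}(-x))$ immediately after the statement. Your direct local computation reproduces exactly these descriptions, and the two case analyses are sound: on $\mathcal{A}_{2m}$ the equation $t^{2m}+s_{2m}=0$ forces $\partial_t=2mt^{2m-1}=0$, so all singular points lie on the zero section (in particular the component $\overline{\mathcal{D}_2}$ of symmetric nodes off the zero section does not contribute here, which you implicitly but correctly dispose of), and your trivialization of $K(D)$ by $dz/z$ at a marked point gives the local model $u^{2m}+z\tilde{s}(z)=0$, whose singularity condition $\tilde{s}(0)=0$ is precisely membership in $H^0(K^{2m}D^{2m-1}(-x))$, i.e.\ the dual hyperplane of $x$. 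Two small points worth making explicit if you write this up: (i) to conclude that $\mathcal{C}_X$ is genuinely an $(n+1)$-st component you should observe not only that $\mathcal{C}_X\neq\mathcal{C}_x$ (your biduality argument) but also that $\mathcal{C}_X\not\subset\mathcal{C}_x$, which is immediate since a section can have a double zero at a generic $y$ without vanishing at $x$, or alternatively since the dual variety of a non-linear curve is a hypersurface; (ii) the very ampleness of $K^{2m}(D^{2m-1})$ needed to separate the points of $D$ follows from its degree being at least $2g+1$, which holds for $g\geq 3$ and $m>1$. With these remarks your argument is a complete and self-contained substitute for the external reference.
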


The explicit description of the above components $\mathcal{C}_X$ and $\mathcal{C}_x$ are given below (see \cite[Proposition $6$]{R21}):
\begin{align*}
\mathcal{C}_X &= \bigcup_{x\in X} H^0(K^{2m}D^{2m-1}(-2x))\\
\mathcal{C}_x &= H^0(K^{2m}D^{2m-1}(-x)) \hspace{0.3cm} x \in D.
\end{align*}

Therefore, $\mathbb{P}(\mathcal{C}_x) \not \cong \mathbb{P}(\mathcal{C}_X)$ as $\mathbb{P}(\mathcal{C}_x)$ is the dual variety of a point and $\mathbb{P}(\mathcal{C}_X)$ is the dual variety of a compact Riemann surface. Also, $\mathcal{C}_x \subset \mathcal{A}_{2m}$ is an hyperplane for all $x \in D$. So $\mathcal{C}_X \subset \mathcal{C}$ is the only irreducible component which is not an hyperplane in $\mathcal{A}_{2m}$.

\section{Birational Torelli theorems}
Let $X$ be a compact Riemann surface of genus $g$. Let $G$ be a simple simply-connected connected complex affine algebraic group and let $\mathcal{M}_G(X)$ denote the moduli space of semistable principal $G$-bundles over $X$.
\begin{theorem}\label{thm1}
	Let $X$ and $X'$ be two compact Riemann surfaces of genus $\geq 3$. If $\mathcal{M}_G(X)$ is $2$-birational to $\mathcal{M}_{G'}(X')$, then $X$ is isomorphic to $X'$. 
\end{theorem}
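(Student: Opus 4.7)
The plan is to reduce the $2$-birational statement to the Biswas--Hoffmann Torelli theorem \cite{BH12} by upgrading the $2$-birational equivalence to a genuine isomorphism of moduli spaces via Proposition \ref{keyprop}. Concretely the strategy is: verify that $\mathrm{Pic}(\mathcal{M}_G(X)) \cong \mathbb{Z}$, apply Proposition \ref{keyprop} to obtain an isomorphism $\mathcal{M}_G(X) \cong \mathcal{M}_{G'}(X')$ of projective varieties, and then invoke the principal-bundle Torelli theorem to recover $X$.

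By Ramanathan's construction, both $\mathcal{M}_G(X)$ and $\mathcal{M}_{G'}(X')$ are normal irreducible projective varieties of dimension $(g-1)\dim G$ and $(g'-1)\dim G'$ respectively; in particular a $2$-birational equivalence forces these dimensions to agree. The essential input for the first step is the classical computation of Kumar--Narasimhan--Ramanan (and, via a different approach, Laszlo--Sorger), which says that for $G$ simple and simply-connected the Picard group $\mathrm{Pic}(\mathcal{M}_G(X))$ is infinite cyclic, generated by the determinant-of-cohomology line bundle. With this fact in hand, the hypotheses of Proposition \ref{keyprop} are all satisfied, and the $2$-birational equivalence automatically extends to a genuine isomorphism $\mathcal{M}_G(X) \cong \mathcal{M}_{G'}(X')$.

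At that point I would apply the Torelli theorem of Biswas and Hoffmann \cite{BH12}: they show that the strictly semistable locus inside the moduli space is intrinsically distinguished by its singularity type, that a morphism built from powers of the anticanonical line bundle then reconstructs the Jacobian of the base curve together with its canonical principal polarization, and that the classical Torelli theorem recovers $X$. Running this reconstruction on both sides of the isomorphism forces $X \cong X'$.

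I do not expect a serious obstacle: the proof is essentially an assembly of Proposition \ref{keyprop}, the Picard group computation, and \cite{BH12}. The only mild subtlety is that the groups $G$ and $G'$ on the two sides may a priori be distinct; however, once the moduli spaces are isomorphic as abstract projective varieties, the Biswas--Hoffmann reconstruction proceeds intrinsically from the variety and its singularity and anticanonical structure, so the possible asymmetry between $G$ and $G'$ plays no role in extracting $X \cong X'$.
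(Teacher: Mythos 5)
Your proposal is correct and follows essentially the same route as the paper: cite the Picard group computation $\mathrm{Pic}(\mathcal{M}_G(X)) \cong \mathbb{Z}$ for $G$ simple and simply-connected, apply Proposition \ref{keyprop} to upgrade the $2$-birational equivalence to an isomorphism of the normal projective moduli spaces, and then invoke the Torelli theorem of Biswas--Hoffmann to recover $X \cong X'$. The extra remarks on dimensions and on the possible asymmetry between $G$ and $G'$ are harmless elaborations of the same argument.
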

\begin{proof}
	By \cite{KN97}, we have $\mathrm{Pic}(\mathcal{M}_G(X)) \cong \mathbb{Z}$. Since $\mathcal{M}_G(X)$ and $\mathcal{M}_{G'}(X')$ are normal projective varieties, by Proposition \ref{keyprop} we have $\mathcal{M}_G(X) \cong \mathcal{M}_{G'}(X')$. Therefore, by \cite{BH12} $X$ is isomorphic to $X'$.
\end{proof}

\begin{theorem}\label{thm2}
	Let $X$ and $X'$ be two compact Riemann surfaces of genus $g \geq 4$ and $g'\geq 4$ respectively with set of marked points $D \subset X$ and $D' \subset X'$. Let $\mathcal{M}_{\textnormal{Sp}}(2m,\alpha,L)$ and $\mathcal{M}_{\textnormal{Sp}}(2m',\alpha',L')$ be the moduli spaces of stable symplectic parabolic bundles over $X$ and $X'$ respectively with degree and rank coprime, small parabolic weights. If $\mathcal{M}_{\textnormal{Sp}}(2m,\alpha,L)$ is $3$-birational to $\mathcal{M}_{\textnormal{Sp}}(2m',\alpha',L')$, then $m=m'$ and $(X,D)$ is isomorphic to $(X',D')$, i.e. there exists an isomorphism $X \cong X'$ sending $D$ to $D'$. 
\end{theorem}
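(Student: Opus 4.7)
The plan is to use the $3$-birational isomorphism to transport the Hitchin map, then recover the pair $(X,D)$ from the Hitchin discriminant via projective duality, essentially following the strategy of the classical (non-birational) version of this theorem but with Propositions \ref{1}, \ref{2} and \ref{5} doing the heavy lifting to ensure nothing essential is lost outside $\mathcal{U}$.

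First, let $\phi\colon \mathcal{U}\xrightarrow{\sim}\mathcal{U}'$ be a $3$-birational isomorphism, so that the complements $\mathcal{M}_{\text{Sp}}(2m,\alpha,L)\setminus\mathcal{U}$ and $\mathcal{M}_{\text{Sp}}(2m',\alpha',L')\setminus\mathcal{U}'$ have codimension $\geq 3$. Since $\phi$ is an isomorphism of smooth open subsets of the stable loci, its cotangent lift $T^*\phi\colon T^*\mathcal{U}\xrightarrow{\sim} T^*\mathcal{U}'$ is a $\mathbb{C}^*$-equivariant isomorphism with respect to the standard fiberwise dilation. Applying Proposition \ref{1} (which requires only codimension $\geq 2$) on both sides, we obtain an induced isomorphism $\Gamma(T^*\mathcal{U})\cong \Gamma(T^*\mathcal{U}')$, and hence an isomorphism $\Psi\colon\mathcal{A}\xrightarrow{\sim}\mathcal{A}'$ intertwining the Hitchin maps $\tilde h$ and $\tilde h'$ and the $\mathbb{C}^*$-actions on both Hitchin bases.

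Next, the codimension-$\geq 3$ hypothesis lets us invoke Proposition \ref{2}: the Hitchin discriminant $\mathcal{D}\subset\mathcal{A}$ is the closure of $\tilde h(\mathcal{R}_{\mathcal{U}})$, where $\mathcal{R}_{\mathcal{U}}$ is the union of complete rational curves in $T^*\mathcal{U}$. Since $T^*\phi$ carries $\mathcal{R}_{\mathcal{U}}$ to $\mathcal{R}_{\mathcal{U}'}$, the map $\Psi$ sends $\mathcal{D}$ to $\mathcal{D}'$. Now decompose both Hitchin bases into $\mathbb{C}^*$-weight spaces: $\mathcal{A}=\bigoplus_{i=1}^m \mathcal{A}_{2i}$ with $\mathcal{A}_{2i}=H^0(X,K^{2i}(D^{2i-1}))$ of weight $2i$, and analogously $\mathcal{A}'=\bigoplus_{i=1}^{m'}\mathcal{A}'_{2i}$. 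Since $\Psi$ is $\mathbb{C}^*$-equivariant, the top weight $2m$ on one side must equal the top weight $2m'$ on the other, giving $m=m'$, and moreover $\Psi$ restricts to an isomorphism $\mathcal{A}_{2m}\cong\mathcal{A}'_{2m}$ carrying $\mathcal{C}=\mathcal{A}_{2m}\cap\mathcal{D}$ onto $\mathcal{C}'=\mathcal{A}'_{2m}\cap\mathcal{D}'$.

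Finally, Proposition \ref{5} decomposes $\mathcal{C}=\mathcal{C}_X\cup\bigcup_{x\in D}\mathcal{C}_x$ into $n+1$ irreducible components, of which exactly one (namely $\mathcal{C}_X$) fails to be a hyperplane; analogously for $\mathcal{C}'$. Hence the number of marked points matches, $n=n'$, the hyperplane components are paired bijectively, and $\Psi$ restricts to a projective isomorphism $\mathbb{P}(\mathcal{C}_X)\cong\mathbb{P}(\mathcal{C}'_{X'})$ of dual varieties. By the biduality theorem, the dual of the dual recovers $X$ (respectively $X'$) embedded by $|K^{2m}D^{2m-1}|$ inside $\mathbb{P}(\mathcal{A}_{2m}^\vee)$, so $\Psi^\vee$ produces an isomorphism $X\cong X'$. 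The pairing of hyperplane components $\mathcal{C}_x\leftrightarrow\mathcal{C}_{x'}$ describes the dual of a point in the same embedding, so this isomorphism sends $D$ to $D'$.

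The main obstacle is the step where the $\mathbb{C}^*$-equivariance of $\Psi$ is invoked: one must confirm that the cotangent lift $T^*\phi$ commutes with fiberwise dilation (which it does, being a linear isomorphism on fibers) and that the resulting $\mathbb{C}^*$-action on the Hitchin base is precisely the one from Proposition \ref{1}, so that the weight decomposition $\mathcal{A}=\bigoplus\mathcal{A}_{2i}$ is canonically recovered. Once this is established, the identification $m=m'$ and the recovery of $(X,D)$ via biduality are essentially formal consequences of the results already assembled in the preceding sections.
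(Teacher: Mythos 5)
Your proposal follows essentially the same route as the paper: lift the isomorphism to the cotangent bundles, use Proposition \ref{1} to transport the Hitchin map and the $\mathbb{C}^*$-action, use Proposition \ref{2} to show the discriminant is preserved, read off $m=m'$ from the weight decomposition, and recover $(X,D)$ from the non-hyperplane component of $\mathcal{C}$ via Proposition \ref{5} and biduality. The only (harmless) deviation is that you obtain $n=n'$ by counting irreducible components of $\mathcal{C}$, whereas the paper extracts $g=g'$ and $n=n'$ from a linear system combining the dimension formulas for the moduli space and for $\mathcal{A}_{2m}$; both are valid.
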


\begin{proof}
	Let $\mathcal{U}$ and $\mathcal{U}'$ be two open subsets of $\mathcal{M}_{\textnormal{Sp}}(2m,\alpha,L)$ and $\mathcal{M}_{\textnormal{Sp}}(2m',\alpha',L')$ respectively of codimension $\geq 3$. Suppose $\tau : \mathcal{U} \xrightarrow{\sim} \mathcal{U}'$ is an isomorphism. Therefore,
	\begin{align}\label{eqn1}
		\begin{split}
	m(2m+1)(g-1)+m^2n &= \dim(\mathcal{U}) \\ &= \dim(\mathcal{U}') \\ &= m'(2m'+1)(g'-1)+m'^2n',
	\end{split}
	\end{align}
where $n=|D|$ and $n'=|D'|$.\\
Also, we have an induced isomorphism $d(\tau^{-1}) : T^*\mathcal{U} \longrightarrow T^*\mathcal{U}'$, which is $\mathbb{C}^*$-equivariant for the standard dilation action. By Proposition \ref{1} there must exist unique $\mathbb{C}^*$-actions on $\mathcal{A}$ and $\mathcal{A}'$ induced from the $\mathbb{C}^*$-action by dilations on the fibers and a $\mathbb{C}^*$-equivariant isomorphism $f: \mathcal{A} \cong \mathrm{Spec}(\Gamma(T^*\mathcal{U})) \longrightarrow \mathrm{Spec}(\Gamma(T^*\mathcal{U}')) \cong \mathcal{A}'$ such that the following diagram commutes
\[
\begin{tikzcd}
	T^*\mathcal{U} \arrow{r}{d(\tau^{-1})} \arrow[swap]{d}{\tilde{h}} & T^*\mathcal{U}' \arrow{d}{\tilde{h}'} \\%
	\mathcal{A} \arrow{r}{f}& \mathcal{A}'
\end{tikzcd}
\]
Since $f$ is $\mathbb{C}^*$-equivariant, it preserves the filtration of subspaces corresponding to the rate of decay, and $f$ takes the subspace of maximum decay of $\mathcal{A}$ to the subspace of maximum decay of $\mathcal{A}'$. Therefore, the number of subspaces in the filtration must be equal and the subspaces of maximum decay must have equal dimension. Hence, $m-1 = m'-1$, i.e. $m=m'$, and $\dim \mathcal{A}_{2m}= \dim \mathcal{A}'_{2m}$. By Riemann-Roch theorem
\[
\dim \mathcal{A}_{2m} = h^0(K_{X}^{2m}(D^{2m-1}))= (4m-1)(g-1)+(2m-1)n.
\]
Therefore,
\begin{align}
	\begin{split}\label{eqn2}
(4m-1)(g-1)+(2m-1)n &= h^0(K_{X}^{2m}(D^{2m-1}))\\ &= h^0(K_{X'}^{2m}((D')^{2m-1}))\\ &= (4m-1)(g'-1)+(2m-1)n'
\end{split}
\end{align}
The equations in \ref{eqn1} and \ref{eqn2} together gives a system of equations
\begin{align*}
(2m^2+m)(g-g')+ m^2(n-n')&=0 \hspace{0.5cm} \mathrm{and}\\ 
(4m-1)(g-g')+(2m-1)(n-n')&=0.
\end{align*}
Since $m>1$, the above system of equations gives $g=g'$ and $n=n'$. 

 The restriction map $f : \mathcal{A}_{2m} \longrightarrow \mathcal{A}'_{2m}$ is $\mathbb{C}^*$-equivariant and homogeneous of degree $2m$, so it is linear. Since $d(\tau^{-1})$ is an isomorphism, it maps the complete rational curves in $T^*\mathcal{U}$ to the complete rational curves in $T^*\mathcal{U}'$. By Proposition \ref{2}, the locus of singular spectral curves is preserved by $f$, i.e. $f$ sends $\mathcal{D} \subset \mathcal{A}$ to $\mathcal{D}' \subset \mathcal{A}'$. Therefore the restriction map sends $\mathcal{C}= \mathcal{D} \cap \mathcal{A}_{2m}$ to $\mathcal{C}' = \mathcal{D}' \cap \mathcal{A}_{2m}'$, i.e. $f(\mathcal{C})=\mathcal{C}'$. This induces an isomorphism $f^\vee : \mathbb{P}(\mathcal{A}_{2m}^\vee) \longrightarrow \mathbb{P}((\mathcal{A}'_{2m})^\vee)$. Since $\mathcal{C}_X \subset \mathcal{C}$ is canonically identified as the only irreducible component which is not an hyperplane, by Proposition \ref{5} $f^\vee$ sends $X$ to $X'$. Moreover, again by Proposition \ref{5} the divisor $D \subset X$ is the dual of the rest of the components $\mathbb{P}(\mathcal{C}_x) \subset \mathbb{P}(\mathcal{C})$. Therefore, $f^\vee$ must send $D$ to $D'$. Hence, an isomorphism $f^\vee : (X,D) \longrightarrow (X',D')$ is obtained.

\end{proof}

\section*{Acknowledgement}
We sincerely thank the anonymous reviewer whose comments and suggestions helped improve and clarify this manuscript.

\end{document}